\definecolor{uuuuuu}{rgb}{0.26666666666666666,0.26666666666666666,0.26666666666666666}
\definecolor{xdxdff}{rgb}{0.49019607843137253,0.49019607843137253,1.}
\definecolor{ffqqqq}{rgb}{1.,0.,0.}
\definecolor{uuuuuu}{rgb}{0.26666666666666666,0.26666666666666666,0.26666666666666666}
\definecolor{qqwuqq}{rgb}{0.,0.39215686274509803,0.}
\definecolor{zzttqq}{rgb}{0.6,0.2,0.}
\definecolor{xdxdff}{rgb}{0.49019607843137253,0.49019607843137253,1.}
\definecolor{qqqqff}{rgb}{0.,0.,1.}
\definecolor{cqcqcq}{rgb}{0.7529411764705882,0.7529411764705882,0.7529411764705882}
\definecolor{sqsqsq}{rgb}{0.12549019607843137,0.12549019607843137,0.12549019607843137}
\theoremstyle{plain}
\newtheorem{theorem}[subsubsection]{Theorem}
\newtheorem{lemma}[subsubsection]{Lemma}
\newtheorem{prop}[subsubsection]{Proposition}
\theoremstyle{definition}
\newtheorem{prop1}[subsection]{Proposition}
\newtheorem{remark}[subsubsection]{Remark}
\newtheorem{rem}[subsection]{Remark}
\newcommand{\uu}{\cup}
\newcommand{\sci}{\subset}
\newcommand{\set}[1]{\{#1\}}
\newcommand{\ga}{\alpha}
\newcommand{\gb}{\beta}
\newcommand{\gd}{\delta}
\renewcommand{\gg}{\gamma}
\newcommand{\gq}{\theta}
\newcommand{\tit}{\textit}
\newcommand{\D}[1]{\mathbb{#1}}
\newcommand{\te}{\text}
\newcommand{\ol}{\overline}
\newcommand{\ul}{\underline}
\newcommand{\pa}{\partial}
\begin{document}
To appear, Communications of the Korean Mathematical Society
\title{Uniform distributions on curves and quantization}

\author{Joseph Rosenblatt}
\address{Department of Mathematics \\
University of Illinois\\
1409 W. Green Street \\
Urbana, Illinois 61801-2907, USA}
\email{rosnbltt@illinois.edu}

 \author{Mrinal Kanti Roychowdhury}
\address{School of Mathematical and Statistical Sciences\\
University of Texas Rio Grande Valley\\
1201 West University Drive\\
Edinburg, TX 78539-2999, USA.}
\email{mrinal.roychowdhury@utrgv.edu}

\subjclass[2010]{60Exx, 94A34.}
\keywords{Uniform distribution, optimal quantizers, quantization error, quantization dimension, quantization coefficient}

\date{}
\maketitle

\pagestyle{myheadings}\markboth{Joseph Rosenblatt and Mrinal Kanti Roychowdhury}{Uniform distributions on curves and quantization}

\begin{abstract}
The basic goal of quantization for probability distribution is to reduce the number of values, which is typically uncountable, describing a probability distribution to some finite set and thus to make an approximation of a continuous probability distribution by a discrete distribution. It has broad application in signal processing and data compression. In this paper, first we define the uniform distributions on different curves such as a line segment, a circle, and the boundary of an equilateral triangle. Then, we give the exact formulas to determine the optimal sets of $n$-means and the $n$th quantization errors for different values of $n$ with respect to the uniform distributions defined on the curves. In each case, we further calculate the quantization dimension and show that it is equal to the dimension of the object; and the quantization coefficient exists as a finite positive number.  This supports the well-known result of  Bucklew and Wise (1982), which says that for a Borel probability measure $P$ with non-vanishing absolutely continuous part the quantization coefficient exists as a finite positive number.
\end{abstract}

\section{Introduction}
`Quantization' refers to the process of approximating the continuous set of values in the image data with a finite (preferably small) set of values
 with broad application in engineering and technology (see \cite{GG, GN, Z}). Although the work of quantization in engineering science has a long history, rigorous mathematical treatment was given by Graf and Luschgy (see \cite{GL1}).
The quantization for a probability distribution refers to the idea of estimating a given probability by a discrete probability with a given number $n$ of supporting points.

Let $P$ denote a Borel probability measure on $\D R^d$, where $d\geq 1$.
For a finite set $\ga\sci\D R^d$, the error $\int \min_{a \in \ga} \|x-a\|^2 dP(x)$ is often referred to as the \tit{cost} or \tit{distortion error} for $\ga$, and is denoted by $V(P; \ga)$. For any positive integer $n$, write $V_n:=V_n(P)=\inf\set{V(P; \ga) :\alpha \subset \mathbb R^d, \text{ card}(\alpha) \leq n}$. Then, $V_n$ is called the $n$th quantization error for $P$. We assume that $\int\|x\|^2 dP(x)<\infty$ to make sure that there is a set $\ga$ for which the infimum occurs (see \cite{AW, GKL, GL1, GL2}). Such a set $\ga$ for which the infimum occurs and contains no more than $n$-points is called an \tit{optimal set of $n$-means} and the elements of an optimal set are called \tit{optimal quantizers}. In some literature it is also referred to as \tit{principal points} (see \cite{MKT}). The numbers
\[\ul D(P):=\liminf_{n\to \infty}  \frac{2\log n}{-\log V_n(P)}, \te{ and } \ol D(P):=\limsup_{n\to \infty} \frac{2\log n}{-\log V_n(P)} \]
are, respectively, called the \tit{lower} and \tit{upper quantization dimensions} of the probability measure $P$. If $\ul D(P)=\ol D (P)$, the common value is called the \tit{quantization dimension} of $P$ and is denoted by $D(P)$. Quantization dimension measures the speed at which the specified measure of the error tends to zero as $n$ approaches to infinity.
For any $s>0$, the two numbers $\liminf_{n\to \infty} n^{ 2/s} V_n(P)$ and $\limsup_{n\to \infty} n^{  2/s} V_n(P)$ are, respectively, called the $s$-dimensional \tit{lower} and \tit{upper quantization coefficients} for $P$. If the $s$-dimensional \tit{lower} and \tit{upper quantization coefficients} are equal, we call it the $s$-dimensional quantization coefficient for $P$. If the $s$-dimensional lower and the upper quantization coefficients are finite and positive, then $s$ equals the quantization dimension of $P$ (see \cite{GL1}). It is well-known that for a Borel probability measure $P$ with non-vanishing absolutely continuous part $\lim_{n\to \infty} n^{2/d} V_n(P)$ is finite and strictly positive  (see  \cite{BW}). This implies that the quantization dimension of a Borel probability measure with non-vanishing absolutely continuous part equals the dimension $d$ of the underlying space.
For a finite set $\ga\sci \D R^d$, the \tit{Voronoi region} generated by $a\in \ga$, denoted by $M(a|\ga)$, is defined to be the set of all elements in $\D R^d$ which are nearest to $a$. The set $\set{M(a|\ga) : a \in \ga}$ is called the \tit{Voronoi diagram} or \tit{Voronoi tessellation} of $\D R^d$ with respect to $\ga$. The point $a$ is called the centroid of its own Voronoi region if $a=E(X : X\in M(a|\ga))$, where $X$ is a $P$-distributed random variable.
Let us now state the following proposition (see \cite{GG, GL1}).
\begin{prop1} \label{prop0}
Let $\ga$ be an optimal set of $n$-means, $a \in \ga$, and $M (a|\ga)$ be the Voronoi region generated by $a\in \ga$. Then, for every $a \in\ga$,
$(i)$ $P(M(a|\ga))>0$, $(ii)$ $ P(\partial M(a|\ga))=0$, $(iii)$ $a=E(X : X \in M(a|\ga))$, and $(iv)$ $P$-almost surely the set $\set{M(a|\ga) : a \in \ga}$ forms a Voronoi partition of $\D R^d$.
\end{prop1}
Proposition~\ref{prop0} says that if $\ga$ is an optimal set and $a\in\ga$, then $a$ is the \tit{conditional expectation} of the random variable $X$ given that $X$ takes values in the Voronoi region of $a$. Recently, optimal quantization for uniform distributions on different regions have been investigated by several authors, for example, see \cite{DR, R, RR}.

In this paper, there are three subsections. In Subsection~\ref{sec0}, Subsection~\ref{sec1}, and Subsection~\ref{sec2}, first we have defined the uniform distributions on a line segment, on a unit circle, and on the boundary of an equilateral triangle. Then, in Theorem~\ref{theorem0}, Theorem~\ref{theorem1}, and Theorem~\ref{Th41}, we give the exact formulas to determine the optimal sets of $n$-means and the corresponding quantization errors for different values of $n\in \D N$. We have further shown that the quantization dimension of the uniform distribution in each case is one, which equals the dimension of the line, circle, and the boundary of the equilateral triangle. Moreover, we have shown that in each case the quantization coefficient exists as a finite positive number. As described in Remark~\ref{rem1}, once the optimal sets of $n$-means are known for a line segment, a circle, and an equilateral triangle, by giving an affine transformation, one can easily obtain them for any line segment, any circle, and the boundary of any equilateral triangle. We did not find any literature where the optimal quantization for a probability distributions on a curve has been investigated. Thus, our work in this paper can be considered as a first advance in this direction. Finally, we would like to mention that the technique used in this paper will be of great help to investigate the optimal quantization for any distribution on any curve.

\section{Main Result}
In this section, in three subsections, we give the main results of the paper.
Let $i$ and $j$ be the unit vectors in the positive directions of $x_1$- and $x_2$-axes, respectively. By the position vector $\tilde a$ of a point $A$, it is meant that $\overrightarrow{OA}=\tilde a$. In the sequel, we will identify the position vector of a point $(a_1, a_2)$ by $(a_1, a_2):=a_1 i +a_2 j$, and apologize for any abuse in notation. For any two vectors $\vec u$ and $\vec v$, let $\vec u \cdot \vec v$ denote the dot product between the two vectors $\vec u$ and $\vec v$. Then, for any vector $\vec v$, by $(\vec v)^2$, we mean $(\vec v)^2:= \vec v\cdot \vec v$. Thus, $|\vec v|:=\sqrt{\vec v\cdot \vec v}$, which is called the length of the vector $\vec v$. For any two position vectors $\tilde a:=( a_1, a_2)$ and $\tilde b:=( b_1, b_2)$, we write $\rho(\tilde a, \tilde b):=\|(a_1, a_2)-(b_1, b_2)\|^2=(a_1-b_1)^2 +(a_2-b_2)^2$, which gives the squared Euclidean distance between the two points $(a_1, a_2)$ and $(b_1, b_2)$. By $E(X)$ and $V:=V(X)$, we represent the expectation and the variance of a random variable $X$ with respect to the probability distribution under consideration. Let $P$ and $Q$ with position vectors $\tilde p$ and $\tilde q$ belong to an optimal set of $n$-means for some positive integer $n$, and let $D$ with position vector $\tilde d$ be a point on the boundary of the Voronoi regions of the points $P$ and $Q$. Since the boundary of the Voronoi regions of any two points is the perpendicular bisector of the line segment joining the points, we have
$|\overrightarrow{DP}|=|\overrightarrow{DQ}|, \te{ i.e., } (\overrightarrow{DP})^2=(\overrightarrow{DQ})^2$ implying
$(\tilde p-\tilde d)^2=(\tilde q-\tilde d)^2$, i.e., $\rho(\tilde p, \tilde d)=\rho(\tilde q, \tilde d)$. In the sequel, such an equation will be referred to as a \tit{canonical equation}. In the paper, there are some decimal numbers; they are rational approximations of some real numbers.

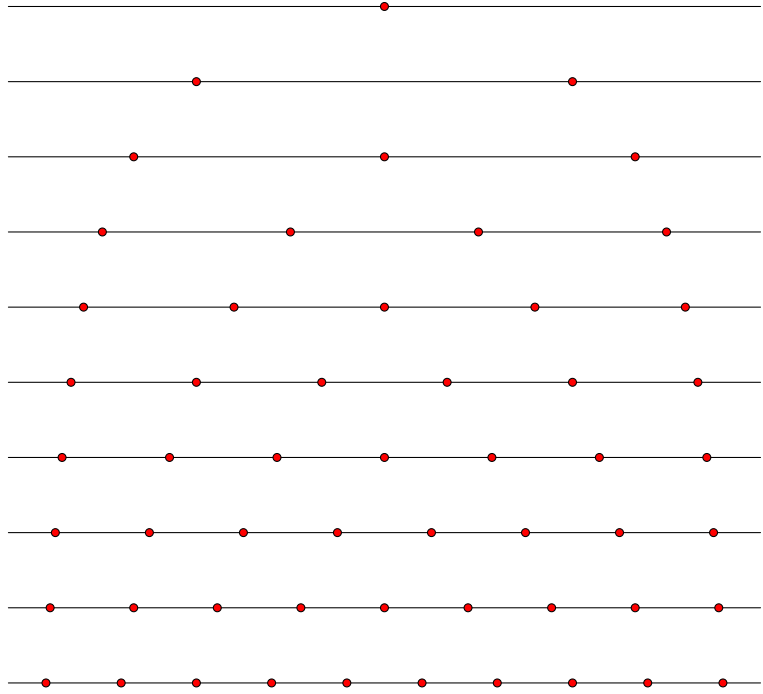
\begin{figure}
\begin{tikzpicture}[line cap=round,line join=round,>=triangle 45,x=1.0cm,y=1.0cm]
\clip(-0.382651326233836,1.6009686885066525) rectangle (10.68280344005557,11.74203853307883);
\draw (0.,10.)-- (10.,10.);
\draw (0.,9.)-- (10.,9.);
\draw (0.,8.)-- (10.,8.);
\draw (0.,7.)-- (10.,7.);
\draw (0.,6.)-- (10.,6.);
\draw (0.,5.)-- (10.,5.);
\draw (0.,4.)-- (10.,4.);
\draw (0.,3.)-- (10.,3.);
\draw (0.,2.)-- (10.,2.);
\draw (0.,11.)-- (10.,11.);
\begin{scriptsize}
\draw [fill=ffqqqq] (0.8333333333333334,6.) circle (1.5pt);
\draw [fill=ffqqqq] (2.5,6.) circle (1.5pt);
\draw [fill=ffqqqq] (4.166666666666667,6.) circle (1.5pt);
\draw [fill=ffqqqq] (5.833333333333333,6.) circle (1.5pt);
\draw [fill=ffqqqq] (7.5,6.) circle (1.5pt);
\draw [fill=ffqqqq] (9.166666666666666,6.) circle (1.5pt);
\draw [fill=ffqqqq] (0.7142857142857143,5.) circle (1.5pt);
\draw [fill=ffqqqq] (2.142857142857143,5.) circle (1.5pt);
\draw [fill=ffqqqq] (3.5714285714285716,5.) circle (1.5pt);
\draw [fill=ffqqqq] (5.,5.) circle (1.5pt);
\draw [fill=ffqqqq] (6.428571428571429,5.) circle (1.5pt);
\draw [fill=ffqqqq] (7.857142857142857,5.) circle (1.5pt);
\draw [fill=ffqqqq] (9.285714285714286,5.) circle (1.5pt);
\draw [fill=ffqqqq] (0.625,4.) circle (1.5pt);
\draw [fill=ffqqqq] (1.875,4.) circle (1.5pt);
\draw [fill=ffqqqq] (3.125,4.) circle (1.5pt);
\draw [fill=ffqqqq] (4.375,4.) circle (1.5pt);
\draw [fill=ffqqqq] (5.625,4.) circle (1.5pt);
\draw [fill=ffqqqq] (6.875,4.) circle (1.5pt);
\draw [fill=ffqqqq] (8.125,4.) circle (1.5pt);
\draw [fill=ffqqqq] (9.375,4.) circle (1.5pt);
\draw [fill=ffqqqq] (0.5555555555555556,3.) circle (1.5pt);
\draw [fill=ffqqqq] (1.6666666666666667,3.) circle (1.5pt);
\draw [fill=ffqqqq] (2.7777777777777777,3.) circle (1.5pt);
\draw [fill=ffqqqq] (3.888888888888889,3.) circle (1.5pt);
\draw [fill=ffqqqq] (5.,3.) circle (1.5pt);
\draw [fill=ffqqqq] (6.111111111111111,3.) circle (1.5pt);
\draw [fill=ffqqqq] (7.222222222222222,3.) circle (1.5pt);
\draw [fill=ffqqqq] (8.333333333333334,3.) circle (1.5pt);
\draw [fill=ffqqqq] (9.444444444444445,3.) circle (1.5pt);
\draw [fill=ffqqqq] (0.5,2.) circle (1.5pt);
\draw [fill=ffqqqq] (1.5,2.) circle (1.5pt);
\draw [fill=ffqqqq] (2.5,2.) circle (1.5pt);
\draw [fill=ffqqqq] (3.5,2.) circle (1.5pt);
\draw [fill=ffqqqq] (4.5,2.) circle (1.5pt);
\draw [fill=ffqqqq] (5.5,2.) circle (1.5pt);
\draw [fill=ffqqqq] (6.5,2.) circle (1.5pt);
\draw [fill=ffqqqq] (7.5,2.) circle (1.5pt);
\draw [fill=ffqqqq] (8.5,2.) circle (1.5pt);
\draw [fill=ffqqqq] (9.5,2.) circle (1.5pt);
\draw [fill=ffqqqq] (1.,7.) circle (1.5pt);
\draw [fill=ffqqqq] (3.,7.) circle (1.5pt);
\draw [fill=ffqqqq] (5.,7.) circle (1.5pt);
\draw [fill=ffqqqq] (7.,7.) circle (1.5pt);
\draw [fill=ffqqqq] (9.,7.) circle (1.5pt);
\draw [fill=ffqqqq] (1.25,8.) circle (1.5pt);
\draw [fill=ffqqqq] (3.75,8.) circle (1.5pt);
\draw [fill=ffqqqq] (6.25,8.) circle (1.5pt);
\draw [fill=ffqqqq] (8.75,8.) circle (1.5pt);
\draw [fill=ffqqqq] (1.6666666666666667,9.) circle (1.5pt);
\draw [fill=ffqqqq] (5.,9.) circle (1.5pt);
\draw [fill=ffqqqq] (8.333333333333334,9.) circle (1.5pt);
\draw [fill=ffqqqq] (2.5,10.) circle (1.5pt);
\draw [fill=ffqqqq] (7.5,10.) circle (1.5pt);
\draw [fill=ffqqqq] (5.,11.) circle (1.5pt);
\end{scriptsize}
\end{tikzpicture}
\caption{Optimal configuration of $n$ means with respect to a uniform distribution on a line segment for $1\leq n\leq 10$.} \label{Fig0}
\end{figure}

 \subsection{Optimal quantization for a uniform distribution on a line segment} \label{sec0}
Without any loss of generality we can assume the line segment as a closed interval $[a, b]$, where $0<a<b<+\infty$. Let $P$ be the uniform distribution defined on the closed interval $[a, b]$. Let the probability density function for $P$ is given by $f$. Then, we have
\[f(x)=\left\{\begin{array}{cc}
\frac 1{b-a} &\te{ if } x\in [a, b],\\
0 & \te{otherwise},
\end{array}
\right.\]
implying $dP(x)=P(dx)=f(x)dx=\frac 1{b-a} dx$.

The following theorem gives the optimal sets of $n$-means for all $n\in \D N$.

\begin{theorem} \label{theorem0}   Let $P$ be the uniform distribution on the closed interval $[a, b]$. Then, the optimal set $n$-means is given by $ \ga_n:=\set{a+\frac {2i-1}{2n}(b-a) : 1\leq i\leq n}$, and the corresponding quantization error is
$V_n:=V_n(P)=\frac{(a-b)^2}{12 n^2}.$
\end{theorem}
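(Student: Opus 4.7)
The plan is to exploit the one-dimensionality of the support $[a,b]$ together with the centroid characterization in Proposition~\ref{prop0}. First I would argue that any optimal $n$-means set lies inside $[a,b]$: if some quantizer lay outside, projecting it onto $[a,b]$ would strictly decrease the distortion (since $P$ is supported on $[a,b]$), contradicting optimality. Writing the quantizers in increasing order as $a_1<a_2<\cdots<a_n$ (distinctness follows from Proposition~\ref{prop0}(i), since each Voronoi region must have positive $P$-measure), the Voronoi regions are the sub-intervals
\[M(a_1|\ga_n)=\Bigl[a,\tfrac{a_1+a_2}{2}\Bigr],\q M(a_i|\ga_n)=\Bigl[\tfrac{a_{i-1}+a_i}{2},\tfrac{a_i+a_{i+1}}{2}\Bigr],\q M(a_n|\ga_n)=\Bigl[\tfrac{a_{n-1}+a_n}{2},b\Bigr],\]
with the middle formula valid for $1<i<n$.

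Next I would apply the centroid condition, Proposition~\ref{prop0}(iii). Because $P$ is uniform, the conditional expectation on any sub-interval equals its midpoint. For each interior index $1<i<n$ this reduces to $a_i=\tfrac14(a_{i-1}+2a_i+a_{i+1})$, i.e.\ $a_{i+1}-a_i=a_i-a_{i-1}$, so the quantizers form an arithmetic progression with some common difference $d$. The two endpoint centroid conditions simplify to $a_1=a+d/2$ and $a_n=b-d/2$, and combined with $a_n=a_1+(n-1)d$ they force $d=(b-a)/n$. Hence $a_i=a+\tfrac{2i-1}{2n}(b-a)$, which is exactly the claimed formula.

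Finally, the quantization error follows by direct integration: each Voronoi region is a closed interval of length $d=(b-a)/n$ centred at $a_i$, so
\[V_n=\sum_{i=1}^n\frac{1}{b-a}\int_{a_i-d/2}^{a_i+d/2}(x-a_i)^2\,dx=\frac{n}{b-a}\cdot\frac{d^3}{12}=\frac{(b-a)^2}{12n^2}.\]
The only mildly delicate point is justifying that the necessary centroid conditions pin down a unique configuration, so the unique critical set must coincide with the optimal set whose existence is guaranteed by the finite-second-moment hypothesis; this is immediate from the tridiagonal structure of the recursion together with the two boundary equations. Everything else is elementary algebra and a single integral of $x^2$.
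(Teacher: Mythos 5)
Your proof is correct and follows essentially the same route as the paper's: apply the centroid condition from Proposition~\ref{prop0} to each Voronoi interval, deduce that the quantizers form an arithmetic progression with common difference $(b-a)/n$, and then compute $V_n$ by a single elementary integral on equal-length intervals. The extra preliminary remarks you include (projection onto $[a,b]$ to rule out exterior quantizers, distinctness from Proposition~\ref{prop0}(i)) are fine and merely make explicit what the paper leaves implicit when it writes $a<a_1<\cdots<a_n<b$.
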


\begin{proof} Let $\ga_n:=\set{a_1, a_2, \cdots, a_n}$ be an optimal set of $n$-means. Since the points in an optimal set are the conditional expectations in their own Voronoi regions, without any loss of generality, we can assume that $a<a_1<a_2<\cdots<a_n<b$. Then, by Proposition~\ref{prop0}, we have
\begin{align}
a_1&=E(X : X \in [a, \frac{1}{2} (a_1+a_2)]), \label{eq01} \\
a_i&=E(X : X \in [\frac{a_{i-1}+a_i}{2}, \frac{a_i+a_{i+1}}{2}]) \te{ for } 2\leq i\leq n-1,  \label{eq21}\\
a_n&=E(X : X\in [\frac{a_{n-1}+a_{n}}{2}, b]). \label{eq3}
\end{align}
By \eqref{eq01}, we have
\[a_1=\frac{\int_{a}^{ \frac{1}{2} (a_1+a_2)} x dP}{\int_{a}^{\frac{1}{2} (a_1+a_2)} dP}=\frac{\int_{a}^{\frac{1}{2} (a_1+a_2)} x f(x) dx}{\int_{a}^{ \frac{1}{2} (a_1+a_2)} f(x) dx}=\frac{1}{4} \left(2 a+a_1+a_2\right)\]
implying
\begin{equation} \label{eq4} 2(a_1-a)=a_2-a_1.\end{equation}
Similarly, by \eqref{eq21} and \eqref{eq3}, we have
\begin{equation} \label{eq5} a_{i-1}-a_i=a_{i+1}-a_i \te{ for } 2\leq i\leq n-1, \te{ and } a_n-a_{n-1}=2(b-a_n).\end{equation}
Combining the equations in \eqref{eq4} and \eqref{eq5}, we have
\begin{equation}\label{eq6}
2(a_1-a)=a_i-a_{i-1}=2(b-a_n)=k \te{ (say) for } 2\leq i\leq n
\end{equation}
implying \[(a_1-a)+\sum_{i=2}^{n}(a_i-a_{i-1})+(b-a_n)=\frac k 2+(n-1) k+\frac k2, \te{ i.e., } b-a=nk, \te{ i.e., }  k=\frac{b-a}n.\]
Thus, putting $k=\frac{b-a}n$, by \eqref{eq6}, we have
\[a_1=a+\frac 1{2n}(b-a), \, a_2=a+\frac 3{2n}(b-a), \, a_3=a+\frac 5{2n}(b-a), \cdots, a_n=a+\frac {2n-1} {2n} (b-a)\]
yielding the fact that $\ga_n:=\set{a+\frac {2i-1}{2n}(b-a) : 1\leq i\leq n}$ forms an optimal set of $n$-means for $P$ (see Figure~\ref{Fig0}). Notice that the probability density function is constant, and the Voronoi regions of the points $a_i$ for $1\leq i\leq n$ are of equal lengths. This yields the fact that the distortion errors due to each $a_i$ are equal. Hence, the $n$th quantization error $V_n:=V_n(P)$ is given by
\begin{align*}
V_n&=\int \min_{a\in \ga_n} (x-a)^2dP=\frac{n}{b-a} \int_a^{\frac 12(a_1+a_2)}  (x-a_1)^2  dx=\frac{(a-b)^2}{12 n^2}.
\end{align*}
 Thus, the proof of the theorem is complete.
\end{proof}

\begin{remark}
 By Theorem~\ref{theorem0}, we see that if $V_n$ is the quantization error of $n$-means for the uniform distribution on $[a, b]$, then $V_n=\frac{(a-b)^2}{12 n^2}$. Thus,
\[D(P)=\lim_{n\to \infty} \frac{2 \log n}{-\log V_n}=\lim_{n\to \infty} \frac{2 \log n}{-\log \frac{(a-b)^2}{12 n^2}}=1,\]
which equals the dimension of the line segment.
Moreover, we have
$\mathop{\lim}\limits_{n\to \infty} n^2 V_n=\frac{(a-b)^2}{12},$
which is a finite positive number.
\end{remark}

\begin{figure}
\begin{tikzpicture}[line cap=round,line join=round,>=triangle 45,x=1.0cm,y=1.0cm]
\clip(-1.9724657463944237,-1.225618187448618) rectangle (10.65632482539858,4.478240453944644);
\draw(0.,3.) circle (1.cm);
\draw(3.,3.) circle (1.cm);
\draw [dash pattern=on 1pt off 1pt,color=ffqqqq] (3.,3.) circle (0.6366197723675814cm);
\draw(6.,3.) circle (1.cm);
\draw [dash pattern=on 1pt off 1pt,color=ffqqqq] (6.000004929463358,3.) circle (0.8269949294633572cm);
\draw(9.,3.) circle (1.cm);
\draw [line width=1.2pt,dash pattern=on 1pt off 1pt,color=ffqqqq] (9.,3.) circle (0.9003166380779603cm);
\draw [line width=1.2pt] (0.,0.) circle (1.cm);
\draw [line width=1.2pt,dash pattern=on 1pt off 1pt,color=ffqqqq] (4.755122099159953E-7,0.) circle (0.9354892334266961cm);
\draw [line width=1.2pt] (3.,0.) circle (1.cm);
\draw [line width=1.2pt,dash pattern=on 1pt off 1pt,color=ffqqqq] (3.,-5.7904639672247756E-6) circle (0.9549238680874043cm);
\draw [line width=1.2pt] (6.,0.) circle (1.cm);
\draw [line width=1.2pt] (9.,0.) circle (1.cm);
\draw [line width=1.2pt,dash pattern=on 1pt off 1pt,color=ffqqqq] (6.0000015565930465,-1.2604320313800195E-6) circle (0.9667686661933416cm);
\draw [line width=1.2pt,dash pattern=on 1pt off 1pt,color=ffqqqq] (9.,-8.949751892969526E-6) circle (0.9745020993028948cm);
\begin{scriptsize}
\draw [fill=ffqqqq] (0.,3.) circle (0.5pt);
\draw [fill=ffqqqq] (3.,3.6366197723675815) circle (1.0pt);
\draw [fill=ffqqqq] (3.,2.3633802276324185) circle (1.0pt);
\draw [fill=ffqqqq] (6.4135,3.7162) circle (1.0pt);
\draw [fill=ffqqqq] (5.17301,3.) circle (1.0pt);
\draw [fill=ffqqqq] (6.4135,2.2838) circle (1.0pt);
\draw [fill=ffqqqq] (9.63662,3.63662) circle (1.0pt);
\draw [fill=ffqqqq] (8.36338,3.63662) circle (1.0pt);
\draw [fill=ffqqqq] (8.36338,2.36338) circle (1.0pt);
\draw [fill=ffqqqq] (9.63662,2.36338) circle (1.0pt);
\draw [fill=ffqqqq] (0.756827,0.549867) circle (1.0pt);
\draw [fill=ffqqqq] (-0.289082,0.889703) circle (1.0pt);
\draw [fill=ffqqqq] (-0.935489,0.) circle (1.0pt);
\draw [fill=ffqqqq] (-0.289082,-0.889703) circle (1.0pt);
\draw [fill=ffqqqq] (0.756827,-0.549867) circle (1.0pt);
\draw [fill=ffqqqq] (3.82699,0.477464829275686) circle (1.0pt);
\draw [fill=ffqqqq] (3.,0.954929658551372) circle (1.0pt);
\draw [fill=ffqqqq] (2.17301,0.477464829275686) circle (1.0pt);
\draw [fill=ffqqqq] (2.17301,-0.477464829275686) circle (1.0pt);
\draw [fill=ffqqqq] (3.,-0.954929658551372) circle (1.0pt);
\draw [fill=ffqqqq] (3.82699,-0.477464829275686) circle (1.0pt);
\draw [fill=ffqqqq] (6.87103,0.419464) circle (1.0pt);
\draw [fill=ffqqqq] (6.21513,0.942528) circle (1.0pt);
\draw [fill=ffqqqq] (5.39723,0.755848) circle (1.0pt);
\draw [fill=ffqqqq] (5.03323,0.) circle (1.0pt);
\draw [fill=ffqqqq] (5.39723,-0.755848) circle (1.0pt);
\draw [fill=ffqqqq] (6.21513,-0.942528) circle (1.0pt);
\draw [fill=ffqqqq] (6.87103,-0.419464) circle (1.0pt);
\draw [fill=ffqqqq] (9.90032,0.372923) circle (1.0pt);
\draw [fill=ffqqqq] (9.37292,0.900316) circle (1.0pt);
\draw [fill=ffqqqq] (8.62708,0.900316) circle (1.0pt);
\draw [fill=ffqqqq] (8.09968,0.372923) circle (1.0pt);
\draw [fill=ffqqqq] (8.09968,-0.372923) circle (1.0pt);
\draw [fill=ffqqqq] (8.62708,-0.900316) circle (1.0pt);
\draw [fill=ffqqqq] (9.37292,-0.900316) circle (1.0pt);
\draw [fill=ffqqqq] (9.90032,-0.372923) circle (1.0pt);
\end{scriptsize}
\end{tikzpicture}
\caption{Optimal configuration of $n$ means with respect to a uniform distribution on a circle for $1\leq n\leq 8$.} \label{Fig00}
\end{figure}
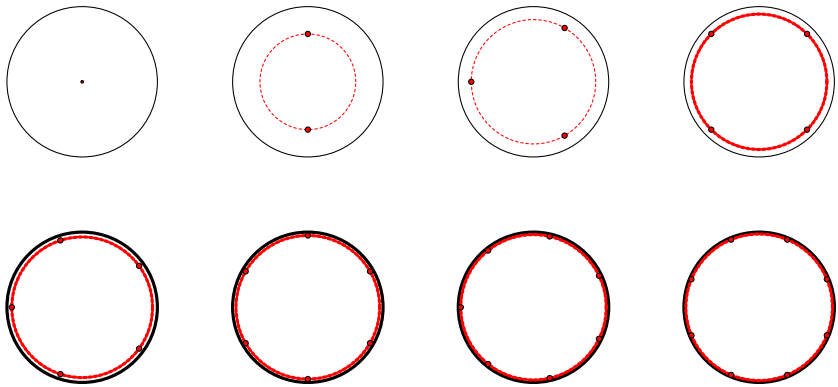
\subsection{Optimal quantization for a uniform distribution on a unit circle} \label{sec1}
Let $L$ be the unit circle given by the parametric equations:
$L:=\set{(x_1, x_2) :  x_1=\cos \gq, \, x_2=\sin\gq \te{ for } 0\leq \gq\leq 2\pi}.$
Let the positive direction of the $x_1$-axis cut the circle at the point $A$, i.e., $A$ is represented by the parametric value $\gq=0$. Let $s$ be the distance of a point on $L$ along the arc starting from the point $A$ in the counterclockwise direction. Then,
\[ds=\sqrt{\Big(\frac {dx_1}{d\gq}\Big)^2+\Big(\frac{dx_2}{d\gq}\Big)^2}\,d\gq=d\gq.\]
 Then, the  probability density function (pdf) $f(x_1, x_2)$ for $P$ is given by
\[f(x_1, x_2)=\left\{\begin{array}{ccc}
\frac 1 {2\pi} & \te{ if } (x_1, x_2) \in L,\\
 0  & \te{ otherwise}.
\end{array}\right.
\]
Thus, we have $dP(s)=P(ds)=f(x_1, x_2) ds=\frac 1{2\pi} d\gq$.
Moreover, we know that if $\hat \gq$ radians is the
central angle subtended by an arc of length $S$ of the unit circle, then $S =\hat \gq$, and
\[P(S)=\int_S dP(s)=\frac 1{2 \pi} \int_S d\gq= \frac{\hat\gq} {2\pi}.\]
The following theorem gives the optimal sets of $n$-means and the $n$th quantization errors for all $n\in \D N$.
\begin{theorem} \label{theorem1}
Let $\ga_n$ be an optimal set of $n$-means for the uniform distribution $P$ on the unit circle $x_1^2+x_2^2=1$ for $n\in\D N$. Then,
\[\ga_n:=\Big\{ \frac n{ \pi} \Big(\sin (\frac \pi n) \cdot \cos ((2j-1){\frac \pi n}), \   \sin (\frac \pi n) \cdot \sin ((2j-1){\frac \pi n})\Big) : j=1, 2, \cdots, n \Big  \} \]
forms an optimal set of $n$-means, and the corresponding quantization error is given by $1-\frac{n^2}{\pi^2}  \sin^2\left(\frac{\pi }{n}\right).$
\end{theorem}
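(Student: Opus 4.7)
The plan is to mirror the line-segment argument: invoke the centroid characterisation from Proposition~\ref{prop0} to set up necessary conditions on any optimal configuration, exploit rotational symmetry to reduce these to a scalar condition on arc lengths, identify the unique solution, and finally evaluate $V_n$ by a direct trigonometric integral.

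First I would take an arbitrary optimal set $\ga_n=\set{a_1,\dots,a_n}$ and, by Proposition~\ref{prop0}, split the circle into $n$ Voronoi arcs of angular lengths $\ell_1,\dots,\ell_n$ (with $\sum_j\ell_j=2\pi$) and midpoint angles $\gq_j^\ast$. Since $dP=\frac{1}{2\pi}\,d\gq$, a one-line computation using sum-to-product identities yields that the centroid of the $j$th arc is
\[a_j=\frac{2\sin(\ell_j/2)}{\ell_j}\bigl(\cos\gq_j^\ast,\sin\gq_j^\ast\bigr),\]
so each $a_j$ lies strictly inside the disk, on the ray through its arc's midpoint.

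The key step is then the canonical equation at a boundary point $\phi_j=\gq_j^\ast+\ell_j/2=\gq_{j+1}^\ast-\ell_{j+1}/2$. Substituting the centroid formula and using $2\sin(\ell/2)\cos(\ell/2)=\sin\ell$, this equation collapses to $F(\ell_j)=F(\ell_{j+1})$ with
\[F(\ell):=\frac{2(1-\cos\ell)}{\ell^{2}}-\frac{2\sin\ell}{\ell},\]
from which I would conclude $\ell_j=2\pi/n$ for every $j$. Rotating so that the first midpoint sits at $\pi/n$, the centroid formula delivers the announced $\ga_n$, and the quantization error reduces by rotational symmetry to the single integral
\[V_n=\frac{n}{2\pi}\int_{0}^{2\pi/n}\bigl\|(\cos\gq,\sin\gq)-a_1\bigr\|^{2}\,d\gq,\]
which, after expanding and using $\int_{0}^{2\pi/n}\cos(\gq-\pi/n)\,d\gq=2\sin(\pi/n)$, simplifies to $1-\frac{n^{2}}{\pi^{2}}\sin^{2}(\pi/n)$.

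The hard part will be the rigidity step: concluding from $F(\ell_j)=F(\ell_{j+1})$ that all arc lengths coincide. Since $F$ is not globally monotone on $(0,2\pi)$, a one-line monotonicity argument is unlikely to succeed; a cleaner route is to re-express the total distortion directly as $V=1-\frac{2}{\pi}\sum_j\sin^{2}(\ell_j/2)/\ell_j$ on the simplex $\sum_j\ell_j=2\pi$. If the function $g(\ell):=\sin^{2}(\ell/2)/\ell$ is strictly concave on the feasible sub-interval (plausibly on $(0,\pi]$, which suffices once $n\geq 2$ since at most one $\ell_j$ can exceed $\pi$), then Jensen's inequality immediately pins down $\ell_j=2\pi/n$ as the unique maximiser of $\sum_j g(\ell_j)$, bypassing $F$ altogether. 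A final one-line symmetry verification that the resulting equispaced centroids genuinely generate the equispaced Voronoi arcs then closes the loop.
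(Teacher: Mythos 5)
Your high-level plan matches the paper's proof almost line for line: compute the centroid of an arc, express $V_n = 1 - \frac{2}{\pi}\sum_j \sin^2(\ell_j/2)/\ell_j$, impose first-order stationarity at the Voronoi boundaries (your canonical-equation route is equivalent to the paper's $\partial V_n/\partial\theta_k = 0$), solve for the equal-arc configuration, and then evaluate $V_n$ at it. The formal identity $F(\ell_j) = F(\ell_{j+1})$ that you obtain is precisely the paper's equation (3.4), up to an overall constant; indeed your $F$ is $-4g'$ with $g(\ell) = \sin^2(\ell/2)/\ell$, which is what a correct differentiation of $V_n$ gives (the paper's printed equation loses a factor $\frac{1}{2}$ on the term $\sin(\theta_k-\theta_{k-1})/(\theta_k-\theta_{k-1})$, a typo that does not affect the conclusion).

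The place where you part company with the paper is the rigidity step, and here you have put your finger on something real. The paper passes directly from ``$F(\ell_1)=\cdots=F(\ell_n)$'' to ``all arc lengths equal $2\pi/n$'' with the phrase \emph{this yields the fact that}, offering no argument. As you observe, $F$ (equivalently $g'$) is not injective on $(0,2\pi)$: $g''$ changes sign once in $(\pi,2\pi)$ (one checks $g''(\ell) = \frac{\ell^2\cos\ell - 2\ell\sin\ell + 2 - 2\cos\ell}{2\ell^3}$, whose numerator $N$ satisfies $N(0)=0$, $N'=-\ell^2\sin\ell$, hence $N<0$ on $(0,\pi]$ but $N(2\pi)=4\pi^2>0$), so $g'$ decreases up to some $\ell^*\in(\pi,2\pi)$ and then increases, and equal values of $g'$ do not force equal arguments. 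So the paper's proof, as written, has a gap at exactly the point you isolate, and your instinct to replace the implicit monotonicity claim by a convexity argument is the right one.

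That said, the fix as you have sketched it is not yet closed either. Your proposed Jensen step requires concavity of $g$ on the \emph{entire} set of feasible arc lengths, and the observation that at most one $\ell_j$ can exceed $\pi$ does not by itself suffice: if exactly one $\ell_j$ lands beyond the inflection $\ell^*$, Jensen does not apply to that term and you get no conclusion. You need one extra move: either (a) show that in fact $g$ is strictly concave on all of $(0,\ell^*)$ and then give a short perturbation/pairing argument that any configuration with one $\ell_j > \pi$ is strictly improved by transferring mass from the long arc to the shortest one (this is easy, because $g'(\ell) < 0$ for $\ell > \pi$ while $g'$ of a short arc is positive, since $g'$ vanishes only at $\ell\approx 2.33$ where $\tan(\ell/2)=\ell$), or (b) argue directly via the equality case of Jensen after first ruling out long arcs. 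Either way, if you write that missing paragraph you will have a proof that is complete at the one point where the paper's is not.
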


\begin{proof}
Let $\ga_n:=\set{a_1, a_2, \cdots, a_n}$ be an optimal set of $n$-means for $P$ for $n\in \D N$. Let $\gq_1-\gq_0, \, \gq_2-\gq_1, \,  \cdots, \, \gq_n-\gq_{n-1}$ be the angles in radians subtended by the Voronoi regions of the points $a_1, a_2, \cdots, a_n$, and let the corresponding lengths of the arcs be $S_1, S_2, \cdots, S_n$, respectively, where $\gq_0=0$ and $\gq_n=2\pi$. Then, $S_k=\gq_k-\gq_{k-1}$ for $1\leq k\leq n$. Thus, for $1\leq k\leq n$, we have
\begin{align*}
a_k&=E(X : X \in S_k)=\frac 1{P(S_k)} \int_{S_k}(i \cos \gq + j \sin \gq) dP=\frac {2 \pi}  {\gq_k-\gq_{k-1}} \int_{\gq_{k-1}}^{\gq_k} \frac 1 {2 \pi}(i \cos \gq +j \sin \gq) d\gq,
\end{align*}
which after simplification implies
\begin{align} \label{eq00} a_k=\frac 1 {\gq_k-\gq_{k-1}} \left(\sin \gq_{k}-\sin\gq_{k-1}, \, \cos\gq_{k-1}-\cos \gq_k\right).\end{align}
The $n$th quantization error is given by
\begin{align*}
V_n&=\int \min_{a\in \ga_n}\|(\cos \gq, \sin \gq)-a\|^2 dP=\frac 1 {2 \pi} \sum_{k=1}^n \int_{\gq_{k-1}}^{\gq_k}\|(\cos \gq, \sin \gq)-a_k\|^2 d\gq \\
&=\frac 1 {2 \pi} \sum_{k=1}^n \int_{\gq_{k-1}}^{\gq_k} \Big(\Big(\cos \gq-\frac{\sin \theta _k-\sin \theta _{k-1}}{\theta _{k}- \theta _{k-1}}\Big){}^2+\Big(\sin \gq-\frac{\cos \theta _{k-1}-\cos \theta _{k}}{\theta _{k}-\theta _{k-1}}\Big)^2\Big)d\gq\\
&=\frac 1 {2 \pi} \sum_{k=1}^n \Big(\theta _{k}-\theta _{k-1}+2\frac {\left(\cos \left(\theta _{k-1}-\theta _k\right)-1\right)}{ \left(\theta _k-\theta _{k-1}\right)}\Big)=\frac 1 {2 \pi} \sum_{k=1}^n \Big(\theta _{k}-\theta _{k-1}-4 \frac{\sin^2 \frac{\gq_k-\gq_{k-1}}{2}}{\gq_k-\gq_{k-1}}\Big)\\
&=1-\frac{2}{\pi}\sum_{k=1}^n \frac{\sin^2 \frac{\gq_k-\gq_{k-1}}{2}}{\gq_k-\gq_{k-1}}.
\end{align*}
The distortion error $V_n$ being optimal, we must have $\frac {\pa V_n}{\pa \gq_k}=0$ for all $k=1, 2, \cdots, n-1$, yielding the fact that
\[-\frac {\sin^2\frac{\gq_k-\gq_{k-1}}{2}}{(\gq_k-\gq_{k-1})^2}+\frac{\sin(\gq_k-\gq_{k-1})}{ (\gq_k-\gq_{k-1})}+\frac {\sin^2\frac{\gq_{k+1}-\gq_{k}}{2}}{(\gq_{k+1}-\gq_{k})^2}-\frac{\sin(\gq_{k+1}-\gq_{k})}{ (\gq_{k+1}-\gq_{k})}=0 \]
implying
\begin{equation} \label{eq1} \frac {\sin^2\frac{\gq_k-\gq_{k-1}}{2}}{(\gq_k-\gq_{k-1})^2}-\frac{\sin(\gq_k-\gq_{k-1})}{ (\gq_k-\gq_{k-1})}=\frac {\sin^2\frac{\gq_{k+1}-\gq_{k}}{2}}{(\gq_{k+1}-\gq_{k})^2}-\frac{\sin(\gq_{k+1}-\gq_{k})}{ (\gq_{k+1}-\gq_{k})}\end{equation}
for all $k=1, 2, \cdots, n-1$. From the above recurrence relation, we have
\begin{equation} \label{eq2} \frac {\sin^2\frac{\gq_1-\gq_{0}}{2}}{(\gq_1-\gq_{0})^2}-\frac{\sin(\gq_1-\gq_{0})}{ (\gq_1-\gq_{0})}=\frac {\sin^2\frac{\gq_{k}-\gq_{k-1}}{2}}{(\gq_{k}-\gq_{k-1})^2}-\frac{\sin(\gq_{k}-\gq_{k-1})}{ (\gq_{k}-\gq_{k-1})}.\end{equation}
for $k=2, 3, \cdots, n$. This yields the fact that
\[\gq_1-\gq_0=\gq_2-\gq_1=\gq_3-\gq_2=\cdots=\gq_n-\gq_{n-1}=\frac {2 \pi}{n}.\]
Thus, we have $\gq_k =\frac {2 \pi k} n$ for $k=1,2, \cdots, n$. Hence, by \eqref{eq00}, we deduce that if $\ga_n:=\set{a_1, a_2, \cdots, a_n}$ is an optimal set of $n$-means, then $a_k= \frac n{ \pi} \Big(\sin (\frac \pi n) \cdot \cos ((2k-1){\frac \pi n}), \   \sin (\frac \pi n) \cdot \sin ((2k-1){\frac \pi n})\Big)$ for $k=1, 2, \cdots, n$  (see Figure~\ref{Fig00}).
Notice that in the optimal set of $n$-means, the distortion errors contributed by each point in their own Voronoi regions are equal. Thus, the quantization error for $n$-means is given by
\begin{align*}
V_n&=n\Big(\te{quanization error due to the point $a_1$ in the opitmal set of $n$-means}\Big)\\
&=n \int_{0}^{\frac {2 \pi} {n}} \|(\cos\gq, \sin\gq)-(\frac n {2 \pi} \sin\frac{2 \pi} {n}, \frac n { \pi} \sin^2\frac{ \pi} {n})\|^2 dP\\
&=n \int_{0}^{\frac {2 \pi} {n}} \frac 1 {2 \pi} \|(\cos\gq, \sin\gq)-(\frac n {2 \pi} \sin\frac{2 \pi} {n}, \frac n { \pi} \sin^2\frac{ \pi} {n})\|^2 d\gq,
\end{align*}
which after simplification yields $V_n=1-\frac {n^2}{\pi^2} \sin^2\frac{ \pi} {n}.$ Thus, the proof of the theorem is complete.
\end{proof}

\begin{remark}
 By Theorem~\ref{theorem1}, we see that if $V_n$ is the quantization error of $n$-means for the uniform distribution on the circle, then $V_n=1-\frac {n^2}{\pi^2} \sin^2\frac{ \pi} {n}$. Thus,
\[D(P)=\lim_{n\to \infty} \frac{2 \log n}{-\log V_n}=\lim_{n\to \infty} \frac{2 \log n}{-\log \left(1-\frac {n^2}{\pi^2} \sin^2\frac{ \pi} {n}\right)}=1,\]
 which equals the dimension of the circle.
Moreover, we have $ \lim_{n\to \infty} n^2 V_n=\frac{\pi ^2}{3},$
which is a finite positive number.
\end{remark}

\begin{prop} \label{prop000}
The points in an optimal set of $n$-means for the uniform distribution $P$ on the unit circle $x_1^2+x_2^2=1$ form a concentric circle of radius $\frac {n}{\pi} \sin\frac{ \pi} {n}$.
\end{prop}
\begin{proof} By Theorem~\ref{theorem1}, we see that if $\ga_n$ is an optimal set of $n$-means for a positive integer $n$, then
$\ga_n=\set{a_1, a_2, \cdots, a_n}$, where
 $a_k= \frac n{ \pi} \Big(\sin (\frac \pi n) \cdot \cos ((2k-1){\frac \pi n}), \   \sin (\frac \pi n) \cdot \sin ((2k-1){\frac \pi n})\Big)$ for $k=1, 2, \cdots, n$. The distance of the points $a_k$ from the center $(0, 0)$ is $\frac {n}{\pi} \sin\frac{ \pi} {n}$ yielding the fact that the points $a_k$ form the circle $x_1^2+x_2^2=\frac {n^2}{\pi^2} \sin^2\frac{ \pi} {n}$, which is the proposition.
\end{proof}

\begin{remark}
By Proposition~\ref{prop000}, we know that the points in an optimal set of $n$-means for the uniform distribution $P$ on the unit circle $x_1^2+x_2^2=1$ lie on the circle $x_1^2+x_2^2=\frac {n^2}{\pi^2} \sin^2\frac{ \pi} {n}$. Notice that the radius of the circle $\frac {n}{\pi} \sin\frac{ \pi} {n}$ is an increasing function of $n$ and approaches to one as $n \to \infty$, i.e., the concentric circle formed by the points in an optimal set of $n$-means approaches to the circle  $x_1^2+x_2^2=1$ as $n\to \infty$ (see Figure~\ref{Fig00}).
 \end{remark}

 \subsection{Optimal quantization for a uniform distribution on the boundary of an equilateral triangle} \label{sec2}

Let $P$ be the uniform distribution defined on the boundary $L$ of the equilateral triangle with vertices $O(0, 0)$, $A(1, 0)$, and $B(\frac 1 2, \frac {\sqrt{3}} 2)$. Let $s$ represent the distance of any point on $L$ from the origin tracing along the boundary of the triangle in the counterclockwise direction. Then, the points $O$, $A$, $B$ are, respectively, represented by $s=0$, $s=1$, $s=2$, and the point with $s=3$ coincides with $O$. Then, the probability density function (pdf) $f(s)$ of the uniform distribution $P$ is given by $f(s):=f(x_1, x_2)=\frac 1{3}$ for all $(x_1, x_2)\in L$, and zero otherwise. Notice that $L=L_1\uu L_2\uu L_3$, where
\begin{align*}
L_1&=\set{(x_1, x_2) :  x_1=t, \, x_2=0 \te{ for } 0\leq t\leq 1},\\
L_2&=\set{(x_1, x_2) : x_1=t, \, x_2=-\sqrt 3(t-1) \te{ for } \frac 12 \leq t\leq 1}, \\
L_3&=\set{(x_1, x_2) : x_1=t, \, x_2=\sqrt 3 t \te{ for } 0\leq t\leq \frac 12}.
\end{align*}
Again, $dP(s)=P(ds)=f(x_1, x_2) ds=\frac 1 3 ds$. On $L_1$, we have $ds=\sqrt{(\frac {dx_1}{dt})^2 +(\frac {dx_2}{dt})^2}|dt|=|dt|$ yielding
$dP=\frac 1 3  |dt|$. On $L_2$ and $L_3$, we have $ds=\sqrt{(\frac {dx_1}{dt})^2 +(\frac {dx_2}{dt})^2}|dt|=2|dt|$ yielding $dP=\frac 23 |dt|$.

Let us now prove the following lemma.
\begin{lemma} \label{lemma00}
Let $X$ be a continuous random variable with uniform distribution taking values on $L$. Then,
$E(X)=(\frac 1 2, \frac {\sqrt 3}{ 6} ) \te{ and } V(X)=\frac 1 6.$
\end{lemma}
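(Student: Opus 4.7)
The plan is to exploit the decomposition $L = L_1 \cup L_2 \cup L_3$ already set up in the text and reduce both $E(X)$ and $V(X)$ to elementary integrals in the parameter $t$. Since each side has arc length $1$ and $P$ is the normalized arc-length measure, each side carries weight $\frac{1}{3}$. The arc-length elements give $dP = \frac{1}{3}|dt|$ on $L_1$ and $dP = \frac{2}{3}|dt|$ on $L_2$ and $L_3$, the factor of two on the slanted sides arising from $ds = \sqrt{1+3}\,|dt| = 2|dt|$.

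For the expectation, I would write $E(X) = \sum_{k=1}^{3}\int_{L_k} X\,dP$ and evaluate each piece separately by direct integration in $t$. A conceptual shortcut: on each individual side the uniform conditional distribution has mean equal to the midpoint of that side, so $E(X)$ is the average of the three side-midpoints $(\tfrac{1}{2},0)$, $(\tfrac{3}{4},\tfrac{\sqrt 3}{4})$, $(\tfrac{1}{4},\tfrac{\sqrt 3}{4})$, which gives $(\tfrac{1}{2},\tfrac{\sqrt 3}{6})$. This agrees with the centroid of the triangle, as expected from the threefold symmetry of the vertex set.

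For the variance, I would use $V(X) = \int_L \|X - E(X)\|^2 \, dP$ and again split into the three arcs. The configuration is symmetric under the reflection $x_1 \mapsto 1-x_1$, which fixes $L_1$, swaps $L_2$ and $L_3$, and preserves $E(X)$; hence the contributions of $L_2$ and $L_3$ to $V(X)$ are equal. It therefore suffices to evaluate the $L_1$ contribution $\frac{1}{3}\int_0^1 \bigl((t-\tfrac{1}{2})^2 + \tfrac{1}{12}\bigr)dt$ and the $L_3$ contribution $\frac{2}{3}\int_0^{1/2}\bigl((t-\tfrac{1}{2})^2 + 3(t-\tfrac{1}{6})^2\bigr)dt$. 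Each of these reduces to a sum of two cubic antiderivatives and evaluates to $\tfrac{1}{18}$, giving $V(X) = \tfrac{1}{18} + 2\cdot \tfrac{1}{18} = \tfrac{1}{6}$.

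There is no real obstacle here; the computation is entirely routine. The only points requiring care are keeping the arc-length differentials $ds = 2|dt|$ correct on $L_2$ and $L_3$ (so that each side carries total $P$-mass $\tfrac{1}{3}$) and tracking the sign in the parameterization $x_2 = -\sqrt 3(t-1)$ of $L_2$ when expanding $\|X - E(X)\|^2$.
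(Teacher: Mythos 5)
Your argument is correct and follows essentially the same route as the paper: decompose $L$ into its three sides $L_1\cup L_2\cup L_3$, track the arc-length elements ($ds=|dt|$ on $L_1$, $ds=2|dt|$ on $L_2,L_3$), and reduce both moments to elementary integrals in $t$. The two shortcuts you add are not in the paper's proof but are sound simplifications: recognizing $E(X)$ as the equal-weight average of the three side midpoints (each side carries $P$-mass $\tfrac13$ and its conditional mean is the midpoint), and exploiting the reflection $x_1\mapsto 1-x_1$ to conclude that $L_2$ and $L_3$ contribute equally to $V(X)$, so only two of the three integrals need to be evaluated. Your arithmetic checks out: the $L_1$ contribution $\frac13\left(\frac1{12}+\frac1{12}\right)=\frac1{18}$ and the $L_3$ contribution $\frac23\left(\frac1{24}+\frac1{24}\right)=\frac1{18}$ sum with the symmetric $L_2$ term to $\frac16$.
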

\begin{proof} Recall that by $(a, b)$ it is meant $a i+b j$, where $i$ and $j$ are the two unit vectors in the positive directions of $x_1$- and $x_2$-axes, respectively. Thus, we have,
\begin{align*}
E(X) =\int_L(x_1 i+x_2 j) dP=\frac{1}{3} \int_0^1 (t,0) \, dt-\frac{2}{3} \int_1^{\frac{1}{2}} (t,-\sqrt{3} (t-1)) \, dt-\frac{2}{3} \int_{\frac{1}{2}}^0 (t,\sqrt{3} t) \, dt=(\frac 1 2, \frac {\sqrt 3}{ 6} ),
\end{align*}
and
\begin{align*} V(X)&=E\|X-E(X)\|^2= \frac 1 {3} \Big(\int_{0}^1 \Big((t-\frac 1 2)^2 +(0-\frac {\sqrt 3}{ 6})^2\Big) dt-2 \int_{1}^{\frac 12} \Big((t-\frac 1 2)^2 \\
&+(-\sqrt 3 (t-1)-\frac {\sqrt 3}{ 6})^2\Big)  dt  -2 \int_{\frac 1 2}^{0} \Big((t-\frac 1 2)^2 +(\sqrt 3 t-\frac {\sqrt 3}{ 6})^2\Big)  dt\Big)=\frac 16.
\end{align*}
Hence, the proof of the lemma is complete.
\end{proof}

\begin{remark} For any $(a, b) \in \D R^2$, we have
\begin{align*}
&E\|X-(a, b)\|^2=\int_L\|(x_1, x_2)-(a, b)\|^2dP=\frac 1 3 \int_L \Big((x_1-a)^2+(x_2-b)^2\Big)ds\\
&=\frac 1 3 \int_L \Big((x_1-\frac 12 )^2+(x_2-\frac {\sqrt 3}{ 6} )^2\Big)ds+\frac 13\Big((\frac 12 -a)^2+(\frac {\sqrt 3}{ 6} -b)^2\Big)\\
&=V(X)+\frac 1 3 \|(a, b)-(\frac 12, \frac {\sqrt 3}{ 6})\|^2,
\end{align*}
which is minimum if $(a, b)=(\frac 12, \frac {\sqrt 3}{ 6})$, and the minimum value is $V(X)$.
Thus, we see that the optimal set of one-mean is the set $\set{(\frac 12, \frac {\sqrt 3}{ 6})}$, and the corresponding quantization error is the variance $V:=V(X)$ of the random variable $X$.
\end{remark}

The following lemma gives the optimal set of two-means.

\begin{lemma}  \label{lemma2} Let $P$ be the uniform distribution defined on the boundary of the equilateral triangle with vertices $O(0, 0)$, $A(1, 0)$, and $B(\frac 12, \frac {\sqrt 3}{2})$. Then, if we divide the equilateral triangle into an isosceles trapezoid and an equilateral triangle in the ratio $\ga : 1-\ga$, where $\ga=\frac{1}{8} (\sqrt{17}-1)$, then the conditional expectations of the two regions give an optimal set of two-means. One of the three such sets is given by $\set{(0.314187, 0.395954), (0.771396, 0.131985)}$, and the quantization error for two-means is $V_2=0.0994281$.
\end{lemma}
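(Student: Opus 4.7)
The plan is to combine the $D_3$ symmetry of $L$ with the centroid and perpendicular-bisector conditions of Proposition~\ref{prop0}. Since Proposition~\ref{prop0}(i) forces both Voronoi regions to have positive mass, the perpendicular bisector $\ell$ of the two optimal means must meet $L$ in exactly two points, necessarily on two different sides of the triangle. The three-fold rotational symmetry of $L$ then lets me fix, after relabelling, that these cuts lie on sides $OA$ and $AB$; the other two ``such sets'' mentioned in the lemma are obtained by rotating the resulting configuration by $\pm 2\pi/3$ about the centroid.

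I would further restrict to configurations invariant under the reflection $\sigma_A$ across the axis of symmetry $\ell_A$ of the triangle through $A(1,0)$ and the midpoint $M=(1/4,\sqrt{3}/4)$ of $OB$; an asymmetric critical configuration would yield a full $D_3$-orbit of six optima, inconsistent with the announced count of three. Under $\sigma_A$ the two means either swap (so $\ell=\ell_A$) or are each fixed (so both lie on $\ell_A$ and $\ell\perp\ell_A$, i.e., $\ell$ is parallel to $OB$). I would treat the second, ``parallel-cut,'' case first since it turns out to be the optimum. Parametrize by $\alpha\in(0,1)$, with $\ell$ meeting $OA$ at $(\alpha,0)$ and $AB$ at $\bigl((1+\alpha)/2,\sqrt{3}(1-\alpha)/2\bigr)$. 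This splits $L$ into a ``trapezoid arc'' of length $1+2\alpha$ containing the full side $OB$, and a ``small-triangle arc'' of length $2(1-\alpha)$ meeting at $A$. A direct integration against $dP=ds/3$ yields the two conditional expectations
\[
E_1=\Big(\tfrac{1+2\alpha+3\alpha^2}{4(1+2\alpha)},\ \tfrac{\sqrt{3}(1+2\alpha-\alpha^2)}{4(1+2\alpha)}\Big),\qquad E_2=\Big(\tfrac{5+3\alpha}{8},\ \tfrac{\sqrt{3}(1-\alpha)}{8}\Big),
\]
both automatically lying on $\ell_A$ as forced by the symmetry.

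The self-consistency condition of Proposition~\ref{prop0} requires that $\ell$ be the perpendicular bisector of $\overline{E_1E_2}$. Since both $E_i$ lie on $\ell_A$, perpendicularity is automatic, so the sole remaining condition is that the midpoint of $E_1$ and $E_2$ lie on $\ell$, i.e., on the line $\sqrt{3}\,x-y=\sqrt{3}\,\alpha$. Substituting the centroid coordinates and clearing denominators collapses to the quadratic $4\alpha^2+\alpha-1=0$, whose unique positive root is $\alpha=(\sqrt{17}-1)/8$. Plugging back gives the coordinates $(0.314187,0.395954)$ and $(0.771396,0.131985)$, and two elementary integrals of squared distance over each arc deliver $V_2=0.0994281$.

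To finish, I would dispose of the remaining ``axis-cut'' case $\ell=\ell_A$: by the same centroid procedure the unique stationary configuration has means symmetric across $\ell_A$ (in a rotated frame with $\ell_A$ the vertical line $x=1/2$, these are $(1/4,\sqrt{3}/6)$ and $(3/4,\sqrt{3}/6)$), with quantization error $5/48\approx 0.10417$. Since $5/48>0.0994281$, the parallel-cut configuration is the global minimum, and its three rotations give the three optimal sets announced in the statement. The main obstacle is justifying the reduction to $\sigma_A$-symmetric configurations; I would handle this by the orbit-counting remark above, together with the existence of at least one optimal set of two-means guaranteed by the hypothesis $\int\|x\|^2\,dP(x)<\infty$, which pins down the optimal orbit uniquely.
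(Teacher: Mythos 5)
Your approach diverges from the paper's in an interesting way, and your final computation is correct: the condition you derive does collapse to $4\alpha^2+\alpha-1=0$ (one can check that $\alpha=\tfrac{1}{8}(\sqrt{17}-1)$ satisfies $(8\alpha+1)^2=17$, i.e.\ $64\alpha^2+16\alpha-16=0$), and your two conditional expectations and the resulting error $0.0994281$ match the paper. The paper, by contrast, does \emph{not} impose any symmetry a priori: it keeps the two cut parameters $\alpha$ (on $OA$) and $\beta$ (on $AB$) independent, writes down both canonical equations $\rho(\tilde d,\tilde p)=\rho(\tilde d,\tilde q)$ and $\rho(\tilde e,\tilde p)=\rho(\tilde e,\tilde q)$, solves the resulting $2\times 2$ polynomial system to obtain \emph{all four} critical $(\alpha,\beta)$ pairs (two axis-cuts through a vertex, one degenerate, one trapezoid/triangle split), and then compares the distortion errors to select the minimum. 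That is more work algebraically but requires no symmetry hypothesis.

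The genuine gap in your argument is the reduction to $\sigma_A$-invariant configurations. Your justification -- ``an asymmetric critical configuration would yield a full $D_3$-orbit of six optima, inconsistent with the announced count of three'' -- is circular, since the count of three optimal sets is part of what the lemma asserts and cannot be invoked to prove it. Nor does existence of an optimal $2$-set (which is indeed guaranteed by $\int\|x\|^2\,dP<\infty$) ``pin down the optimal orbit uniquely'': a priori there could be several $D_3$-orbits of optima, or an orbit of size $6$, and the quantization functional is not convex in the codebook positions, so there is no general principle forcing an invariant minimizer. What one actually needs is to enumerate \emph{all} stationary $(\alpha,\beta)$ of the error functional (subject to the centroid conditions) and compare -- which is exactly what the paper's two-equation system does. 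It so happens that every stationary configuration for this problem is reflection-symmetric, so your ansatz captures them all, but that fact is an output of the full calculation, not something you are entitled to assume at the outset. To make your route rigorous you would either have to solve the unreduced two-variable system anyway, or supply an independent argument (not orbit-counting from the conclusion) that an optimizer must be fixed by some reflection in $D_3$.
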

\begin{proof}
 Let the points $P$ and $Q$ with position vectors $\tilde p$ and $\tilde q$ form an optimal set of two-means for the uniform distribution on $L$. Let $\ell$ be the boundary of their Voronoi regions. Let $\ell$ intersect $OA$ and $AB$ at the two points $D$ and $E$ with position vectors $\tilde d$ and $\tilde e$, respectively. Let $D$ and $E$ are given by the parametric values $t=\ga$ and $t=\gb$. Then, $\tilde d=(\ga, 0)$, and $\tilde e=(\gb, -\sqrt 3(\gb-1))$. Let $P$ be in the region which contains the point $A$, and $Q$ be in the region which contains the point $B$.
 Then, \[\tilde p=E(X : X\in DA\uu AE), \te{ and } \tilde q=E(X : X\in EB\uu BO\uu OD),\]
 yielding
 \begin{align*}
 \tilde p=\frac 1{P(DA\uu AE)}\mathop{\int}\limits_{DA\uu AE} x dP=\Big(\frac{-\frac{\alpha ^2}{2}-2 (\frac{\beta ^2}{2}-\frac{1}{2})+\frac{1}{2}}{-\alpha -2 (\beta -1)+1},-\frac{2 (-\frac{1}{2} \sqrt{3} \beta ^2+\sqrt{3} \beta -\frac{\sqrt{3}}{2})}{-\alpha -2 (\beta -1)+1}\Big),
 \end{align*}
and
\[\tilde q=\frac 1{P(EB\uu BO\uu OD)}\mathop{\int}\limits_{EB\uu BO\uu OD} x dP=\Big(\frac{\frac{\alpha ^2}{2}-2 (\frac{1}{8}-\frac{\beta ^2}{2})+\frac{1}{4}}{\alpha -2 (\frac{1}{2}-\beta )+1},\frac{\frac{\sqrt{3}}{4}-2 (\frac{\sqrt{3} \beta ^2}{2}-\sqrt{3} \beta +\frac{3 \sqrt{3}}{8})}{\alpha -2 (\frac{1}{2}-\beta )+1}\Big).\]
Since $D$ and $E$ lie on $\ell$, and the boundary of the Voronoi regions $\ell$ is the perpendicular bisector of the line segment joining $P$ and $Q$, we have the canonical equations as $\rho(\tilde d, \tilde p)=\rho(\tilde d, \tilde q)$ and $\rho(\tilde e, \tilde p)=\rho(\tilde e, \tilde q)$ which after simplification give the following two equations:
\begin{align*}
&2 \alpha ^5+\alpha ^4 (12 \beta -11)+4 \alpha ^3 \left(4 \beta ^2-10 \beta +3\right)+2 \alpha ^2 \left(8 \beta ^3-26 \beta ^2+28 \beta -3\right)\\
&\qquad \qquad +2 \alpha  \left(8 \beta ^3-20 \beta ^2+12 \beta -3\right)-64 \beta ^5+240 \beta ^4-352 \beta ^3+252 \beta ^2-84 \beta +9=0,\notag
\end{align*}
and
\begin{align*}
&-9-5 \alpha ^4+2 \alpha ^5+12 \beta +108 \beta ^2-288 \beta ^3+240 \beta ^4-64 \beta ^5+\alpha ^3 \left(-8 \beta ^2+4 \beta +4\right)\\
&\qquad \qquad +\alpha ^2 \left(-32 \beta ^3+76 \beta ^2-44 \beta +6\right)-2 \alpha  \left(48 \beta ^4-144 \beta ^3+160 \beta ^2-78 \beta +15\right)=0. \notag
\end{align*}
Solving the above two equations in $\ga$ and $\gb$, we get the following four sets of solutions:
\begin{align*} & \Big\{\alpha=0,\beta=\frac{3}{4}\Big\},\Big\{\alpha=\frac{1}{2},\beta=\frac{1}{2}\Big\},\{\alpha=1,\beta=1\},\\
&\qquad \te{ and }  \Big\{\alpha=\frac{1}{8} (\sqrt{17}-1),\beta=\frac{\frac{79}{8} (\sqrt{17}-1)-51}{65 (\sqrt{17}-1)-232}\Big\}
\end{align*}
Thus, we have the following results:

$(i)$
Putting $\{\alpha=0,\beta=\frac{3}{4}\}$, we obtain $\tilde p=(\frac{5}{8},\frac{1}{8 \sqrt{3}})$ and $\tilde q=(\frac{3}{8},\frac{7}{8 \sqrt{3}})$. Notice that in this case $D$ coincides with the vertex $O$, and $\tilde p$ and $\tilde q$ are symmetric about the line $\ell$, i.e., the boundary line $\ell$ of the two Voronoi regions becomes a median of the triangle passing through the vertex $O$, and thus, the corresponding distortion error, denoted by $V_{2, (i)}$, is given by
\begin{align*}
V_{2,(i)}&=2\int_{OA\uu AE}\rho((x_1, x_2), \tilde p) dP=\frac 2 3\Big(\int_\ga^1\rho((t, 0), \tilde p) dt-2\int_1^\gb\rho((t, -\sqrt 3(t-1)), \tilde p) dt\Big)
\end{align*}
yielding $V_{2,(i)}=\frac{5}{48}=0.104167$.

$(ii)$
Putting $\Big\{\alpha=\frac{1}{2},\beta=\frac{1}{2}\Big\}$, we obtain $\tilde p=(\frac{3}{4},\frac{1}{2 \sqrt{3}})$ and $\tilde q=(\frac{1}{4},\frac{1}{2 \sqrt{3}})$. Notice that in this case $E$ coincides with the vertex $B$, and $\tilde p$ and $\tilde q$ are symmetric about the line $\ell$, i.e., the boundary line $\ell$ of the two Voronoi regions becomes a median of the triangle passing through the vertex $B$, and thus, the corresponding distortion error, denoted by $V_{2, (ii)}$, is given by
\begin{align*}
V_{2,(ii)}&=2\int_{DA\uu AB}\rho((x_1, x_2), \tilde p) dP=\frac 2 3\Big(\int_\ga^1\rho((t, 0), \tilde p) dt-2\int_1^\gb\rho((t, -\sqrt 3(t-1)), \tilde p) dt\Big)
\end{align*}
yielding $V_{2,(ii)}=\frac{5}{48}=0.104167$.

$(iii)$ Putting $\{\alpha=1,\beta=1\}$, we see that the points $D$ and $E$ coincides with $A$, which is not true.

$(iv)$ Putting $ \{\alpha=\frac{1}{8} (\sqrt{17}-1),\beta=\frac{\frac{79}{8} (\sqrt{17}-1)-51}{65 (\sqrt{17}-1)-232} \} $, we see that the points $D$ and $E$, respectively, are given by $(\frac{1}{8} (\sqrt{17}-1), 0)$ and $\Big(\frac{\frac{79}{8} (\sqrt{17}-1)-51}{65 (\sqrt{17}-1)-232},-\sqrt{3} \Big(\frac{\frac{79}{8} (\sqrt{17}-1)-51}{65 (\sqrt{17}-1)-232}-1\Big)\Big)$. Thus, in this case we have $OD=BE$ and $DA=AE=ED$ implying the fact that the boundary of the Voronoi regions $\ell$ divides the equilateral triangle into an isosceles trapezoid and an equilateral triangle in the ratio $\ga : 1-\ga$, where $\ga=\frac{1}{8} (\sqrt{17}-1)$. The corresponding distortion error $V_{2, (iv)}$ is given by
\begin{align*} V_{2,(iv)}&=\frac 1 3\Big(\int_{\alpha }^1 \rho((t, 0), \tilde p) \, dt- 2\int_1^{\beta } \rho((t,-\sqrt{3} (t-1)), \tilde p) \, dt -2\int_{\beta }^{\frac{1}{2}} \rho((t,-\sqrt{3} (t-1)), \tilde q) \, dt \\
&\qquad-2\int_{\frac{1}{2}}^0  \rho((t,\sqrt{3} t), \tilde q) \, dt+\int_0^{\alpha } \rho((t, 0), \tilde q) \, dt\Big)\\
&=\frac 1 3(0.0330382+0.0330382+0.0716907+0.0888266+0.0716907)=0.0994281.
\end{align*}
Thus, comparing the distortion errors $V_{2,(i)}$, $V_{2,(ii)}$, and $V_{2,(iv)}$, we can say that when the boundary of the Voronoi regions $\ell$ divides the equilateral triangle into an isosceles trapezoid and an equilateral triangle in the ratio $\ga : 1-\ga$, where $\ga=\frac{1}{8} (\sqrt{17}-1)$, then the conditional expectations of the two Voronoi regions give an optimal set of two-means. There are three such sets; one of them is given by $\set{(0.314187, 0.395954), (0.771396, 0.131985)}$ and the quantization error for two-means is $V_2=0.0994281$.
Thus, the proof of the lemma is complete.
\end{proof}
The following lemma gives the optimal set of three-means.

\begin{lemma}  \label{lemma14}  Let $P$ be the uniform distribution defined on the boundary of the equilateral triangle with vertices $O(0, 0)$, $A(1, 0)$, and $B(\frac 12, \frac {\sqrt 3}{2})$. Then, the set $\set{(\frac{13}{16},\frac{\sqrt{3}}{16}), (\frac{1}{2},\frac{3 \sqrt{3}}{8}), (\frac{3}{16},\frac{\sqrt{3}}{16})}$ forms the optimal set of three-means, and the quantization error for three-means is $V_3=\frac{7}{192}=0.0364583$.
\end{lemma}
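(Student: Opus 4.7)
The plan is to exploit the threefold rotational symmetry of $P$ about the centroid $C=(\frac 12,\frac{\sqrt 3}{6})$ and then follow the argument of Lemma~\ref{lemma2}. The rotation $R$ by $120\degre$ about $C$ preserves both $L$ and the uniform distribution $P$, so $R(\ga_3)$ is optimal whenever $\ga_3$ is. It is therefore natural to test first the symmetric partition of $L$ into three arcs of arc-length $1$, each containing one vertex and having endpoints at the midpoints of the two adjacent edges.

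For this candidate partition I would first compute the conditional expectations. Writing $S_A$ for the arc through $A$, which consists of $\set{t\in[\frac12,1]}$ on $L_1$ together with $\set{t\in[\frac34,1]}$ on $L_2$, we have $P(S_A)=\frac 13$, and
\[E(X\mid X\in S_A)=\int_{1/2}^{1}(t,0)\,dt+2\int_{3/4}^{1}(t,-\sqrt 3(t-1))\,dt=\Big(\frac{13}{16},\frac{\sqrt 3}{16}\Big);\]
the conditional expectations over the arcs through $B$ and $O$ come out, by the analogous computation or by applying $R$, to $(\frac12,\frac{3\sqrt 3}{8})$ and $(\frac 3{16},\frac{\sqrt 3}{16})$. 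A short check shows that the three pairwise perpendicular bisectors of these quantizers are precisely the three altitudes of the triangle, and each altitude meets $L$ at one vertex and at the midpoint of the opposite side. Hence the Voronoi diagram of $\ga_3$ partitions $L$ into exactly the three arcs $S_O,S_A,S_B$, confirming that this is a critical configuration. By symmetry the quantization error is three times the contribution of $S_A$, so a direct integration yields
\[V_3=\int_{1/2}^{1}\rho\big((t,0),(\tfrac{13}{16},\tfrac{\sqrt 3}{16})\big)\,dt+2\int_{3/4}^{1}\rho\big((t,-\sqrt 3(t-1)),(\tfrac{13}{16},\tfrac{\sqrt 3}{16})\big)\,dt=\frac 7{192}.\]

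To finish, I would rule out every other critical configuration by the canonical-equation technique used in Lemma~\ref{lemma2}. Given an arbitrary candidate $\ga_3=\set{p_1,p_2,p_3}$, I would parametrise the points where the three perpendicular bisectors meet $L$, write down the canonical equations $\rho(\tilde d,\tilde p_i)=\rho(\tilde d,\tilde p_j)$ at each such intersection point, together with the centroid conditions $p_i=E(X\mid X\in M(p_i\mid\ga_3))$, and solve the resulting polynomial system case by case, classifying by which edges are met by the Voronoi boundaries. The main obstacle is the combinatorial explosion: several patterns are possible, including configurations in which one Voronoi region avoids an entire edge or two of the quantizers lie on a common altitude of the triangle. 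In each such case the system either admits no admissible solution with $p_i\in M(p_i\mid\ga_3)$, or produces a distortion strictly larger than $\frac 7{192}$; after discarding the spurious branches exactly as in the cases $(i)$–$(iv)$ of Lemma~\ref{lemma2}, the symmetric configuration above is the only survivor, and hence it is optimal.
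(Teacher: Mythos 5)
Your guess-and-verify half is correct and clean: the conditional expectations over the three symmetric arcs do come out to $(\frac{13}{16},\frac{\sqrt 3}{16})$, $(\frac12,\frac{3\sqrt 3}{8})$, $(\frac{3}{16},\frac{\sqrt 3}{16})$; these three points are equidistant from the centroid $(\frac12,\frac{\sqrt3}{6})$, so their Voronoi diagram consists of three rays along the altitudes of the triangle emanating from the centroid toward the edge midpoints, and each cell intersects $L$ in exactly the arc of length $1$ you started from. The distortion integral also comes out to $\frac{7}{192}$ as you claim. This is a genuinely different entry point from the paper, which does not guess the symmetric configuration a priori but instead parametrises the three crossing points $D\in OA$, $E\in AB$, $F\in OB$ by $(\alpha,\beta,\gamma)$, imposes the centroid conditions on $\tilde p,\tilde q,\tilde r$, writes three canonical equations $\rho(\tilde d,\tilde p)=\rho(\tilde d,\tilde r)$, $\rho(\tilde e,\tilde p)=\rho(\tilde e,\tilde q)$, $\rho(\tilde f,\tilde q)=\rho(\tilde f,\tilde r)$, and solves.

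The genuine gap is your final step. You correctly identify that you still need to beat every other critical configuration, and you sketch a case analysis, but you do not actually carry it out; you simply assert that ``the symmetric configuration is the only survivor.'' That assertion is the whole content of the optimality claim, so as written the proof is incomplete. What the paper actually does here is finite and concrete: under the one-crossing-per-side ansatz, the three canonical equations have exactly three real solutions, $(\alpha,\beta,\gamma)=(0,1,\tfrac12)$, $(\tfrac12,\tfrac34,\tfrac14)$, and $(1,\tfrac12,0)$, with corresponding distortions $\tfrac{1}{12}$, $\tfrac{7}{192}$, and $\tfrac{1}{12}$; the middle one is the minimiser, and it is precisely your symmetric configuration. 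To close your argument you should at minimum reproduce this three-solution computation and the comparison, rather than appeal to a ``combinatorial explosion'' you do not resolve. (As a side point, the broader exclusion of alternative Voronoi combinatorics --- e.g. a cell meeting two opposite edges, or a side crossed twice --- is not treated in the paper either; that is a shared omission, not something that distinguishes your argument from theirs.)
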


\begin{proof}
Let the points $P$, $Q$, and $R$ with position vectors $\tilde p$, $\tilde q$, and $\tilde r$ form an optimal set of three-means for the uniform distribution on $L$. Let the boundaries of their Voronoi regions cut the sides $OA$, $AB$, and $BO$ at the points $D$, $E$, and $F$ with parametric values given by $t=\ga$, $t=\gb$, and $t=\gg$, respectively. Let $\tilde d$, $\tilde e$, and $\tilde f$ be the position vectors of $D$, $E$, and $F$, respectively. Then,
\[ \tilde d=(\ga, 0), \, \tilde e=(\beta , -\sqrt{3} (\beta -1)),  \, \te{ and }  \tilde f=(\gamma , \sqrt{3} \gamma).\]
Let $\tilde p=E(X : X \in DA\uu AE)$, $\tilde q=E(X : X \in EB\uu BF)$, and $\tilde r=E(X : X\in FO\uu OD)$. Then, using the definitions of conditional expectations, and after some calculations, we have
\begin{align*}
\tilde p&=\Big(\frac{-\frac{\alpha ^2}{2}-2  (\frac{\beta ^2}{2}-\frac{1}{2})+\frac{1}{2}}{-\alpha -2 (\beta -1)+1},-\frac{2 (-\frac{1}{2} \sqrt{3} \beta ^2+\sqrt{3} \beta -\frac{\sqrt{3}}{2})}{-\alpha -2 (\beta -1)+1}\Big), \\
 \tilde q& =\Big(\frac{-2(\frac{1}{8}-\frac{\beta ^2}{2})-2 (\frac{\gamma ^2}{2}-\frac{1}{8})}{-2 (\frac{1}{2}-\beta)-2 (\gamma -\frac{1}{2})},\frac{-2 (\frac{\sqrt{3} \beta ^2}{2}-\sqrt{3} \beta +\frac{3 \sqrt{3}}{8})-2 (\frac{\sqrt{3} \gamma ^2}{2}-\frac{\sqrt{3}}{8})}{-2 (\frac{1}{2}-\beta)-2 (\gamma -\frac{1}{2})}\Big), \te{ and }\\
\tilde r&=\Big(\frac{\frac{\alpha ^2}{2}+\gamma ^2}{\alpha +2 \gamma },\frac{\sqrt{3} \gamma ^2}{\alpha +2 \gamma }\Big).
\end{align*}
Notice that the point $D$ lies on the boundary of the Voronoi regions of $P$ and $R$ implying $\rho(\tilde d, \tilde r)=\rho(\tilde d, \tilde p)$. Similarly, for the points $E$ and $F$, we have $\rho(\tilde e, \tilde p)=\rho(\tilde e, \tilde q)$, and $\rho(\tilde f, \tilde q)=\rho(\tilde f, \tilde r)$. Thus, we have three equations in $\ga$, $\gb$, and $\gg$. Solving the three equations, we obtain the following three sets of solutions:
\[ \{\alpha=0,\beta=1,\gamma=\frac{1}{2} \}, \{\alpha=\frac{1}{2},\beta=\frac{3}{4},\gamma=\frac{1}{4} \}, \te { and }  \{\alpha=1,\beta=\frac{1}{2},\gamma=0 \}.\]
Thus, we obtain the following three results:

$(i)$ For $ \{\alpha=0,\beta=1,\gamma=\frac{1}{2} \}$, we have $\tilde p=(\frac{1}{2}, 0), \, \tilde q=(\frac{3}{4},\frac{\sqrt{3}}{4}), \, \tilde r=(\frac{1}{4},\frac{\sqrt{3}}{4})$ with distortion error obtained as $\frac 1{12}=0.0833333$.

$(ii)$ For $ \{\alpha=\frac{1}{2},\beta=\frac{3}{4},\gamma=\frac{1}{4} \}$, we have $\tilde p=(\frac{13}{16},\frac{\sqrt{3}}{16}), \, \tilde q=(\frac{1}{2},\frac{3 \sqrt{3}}{8}), \, \tilde r=(\frac{3}{16},\frac{\sqrt{3}}{16})$ with distortion error obtained as $\frac{7}{192}=0.0364583$.

$(iii)$ For $ \{\alpha=1,\beta=\frac{1}{2},\gamma=0 \}$, we have $\tilde p=(\frac{3}{4},\frac{\sqrt{3}}{4}), \, \tilde q=(\frac{1}{4},\frac{\sqrt{3}}{4}), \, \tilde r=(\frac{1}{2},0)$ with distortion error obtained as $\frac 1{12}=0.0833333$.

Comparing the distortion errors, we deduce that the set $\set{(\frac{13}{16},\frac{\sqrt{3}}{16}), (\frac{1}{2},\frac{3 \sqrt{3}}{8}), (\frac{3}{16},\frac{\sqrt{3}}{16})}$ forms the optimal set of three-means (see Figure~\ref{Fig1}), and the quantization error for three-means is $V_3=\frac{7}{192}=0.0364583$. Thus, the lemma is yielded.
\end{proof}

The following lemma gives the optimal set of four-means.

\begin{lemma} \label{lemma15}Let $P$ be the uniform distribution defined on the boundary of the equilateral triangle with vertices $O(0, 0)$, $A(1, 0)$, and $B(\frac 12, \frac {\sqrt 3}{2})$. Then, the set
\[\set{(0.133784, 0.140735), (0.5, 0), (0.866216, 0.140735), (0.5, 0.653763)}\] forms an optimal set of four-means, and the corresponding quantization error is $V_4=0.028269$.
\end{lemma}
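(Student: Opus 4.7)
The proof will follow the template of Lemma~\ref{lemma14} for three-means, taking advantage of the reflection symmetry about the vertical median $x_1=\frac12$ of the equilateral triangle that is already visible in the candidate optimizer: one mean on the bottom edge, one mean on the median near $B$, and two mirror-image means in the interior. Correspondingly, the four Voronoi arcs on $L$ are of combinatorial type $2$-$1$-$1$. Two arcs lie on $OA$, meeting at Voronoi boundary points $D_1=(\alpha,0)$ and $D_2=(1-\alpha,0)$; one arc wraps the vertex $A$ across $OA$ and $L_2$ and ends at $D_3=(\beta,-\sqrt{3}(\beta-1))$; its mirror image wraps $O$ across $L_3$ and $OA$ and ends at $D_4=(1-\beta,\sqrt{3}(1-\beta))$; the fourth arc wraps $B$ between $D_3$ and $D_4$.

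I would then compute the four means $\tilde p_1,\ldots,\tilde p_4$ as arc-by-arc conditional expectations of $dP=\frac13\,ds$, exactly as in the proofs of Lemmas~\ref{lemma2} and~\ref{lemma14}. Reflection symmetry forces $\tilde p_1=(\frac12,0)$ and puts $\tilde p_3$ on the median, while $\tilde p_4$ is the mirror image of $\tilde p_2$; both remaining means are explicit rational functions of $(\alpha,\beta)$. Two independent canonical equations then suffice---namely $\rho(\tilde d_1,\tilde p_1)=\rho(\tilde d_1,\tilde p_4)$ at $D_1$ and $\rho(\tilde d_3,\tilde p_2)=\rho(\tilde d_3,\tilde p_3)$ at $D_3$, the equations at $D_2$ and $D_4$ being their mirror twins---and, after clearing denominators, produce a polynomial system in $(\alpha,\beta)$. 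One enumerates its real solutions in the admissible range $(0,\frac12)\times(\frac12,1)$ by elimination and numerical root-finding, substitutes each candidate back into $V(P;\{\tilde p_1,\ldots,\tilde p_4\})$ by direct arc-integration, and keeps the smallest; this produces $V_4=0.028269$ and the quartet in the statement.

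The remaining---and hardest---step is certifying global rather than intra-ansatz optimality. This means enumerating the other combinatorial patterns by which four Voronoi boundary points can be distributed on the three sides, namely $3$-$1$-$0$, $2$-$2$-$0$, $4$-$0$-$0$, together with the $2$-$1$-$1$ patterns aligned with the other two axes of symmetry of the triangle. The three-fold rotational symmetry collapses the $2$-$1$-$1$ variants to the case already treated and reduces the remaining patterns to a handful of genuinely independent subcases; in each one writes the analogous centroid and canonical-equation system, extracts its critical solutions, and verifies that the resulting distortion strictly exceeds $0.028269$. The algebraic bookkeeping in these auxiliary cases, together with the high degree of the resulting polynomials, is the principal technical obstacle.
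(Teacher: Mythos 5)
Your proposal is essentially the same route as the paper's proof: fix the $2$-$1$-$1$ distribution of Voronoi boundary points (two on $OA$, one each on $AB$ and $OB$), write the four means as arc-wise conditional expectations, and set up the canonical perpendicular-bisector equations at the boundary points. The only difference is that the paper keeps all four abscissae $\alpha,\beta,\gamma,\delta$ free and solves four canonical equations (the reflection symmetry then emerges from the numerical solution $\alpha+\beta=1$, $\gamma+\delta=1$), whereas you impose the mirror symmetry about $x_1=\tfrac12$ up front, which halves the unknowns to $(\alpha,\beta)$ and the equations to two; both produce the same quartet and $V_4=0.028269$. The global-optimality step you flag as the hardest---enumerating and ruling out the alternative patterns $3$-$1$-$0$, $2$-$2$-$0$, $4$-$0$-$0$---is in fact not carried out in the printed proof either: the paper simply adopts the $2$-$1$-$1$ ansatz, solves, and reports the answer, deferring the combinatorial justification to Remark~\ref{remark101} (which is stated for $n\geq 6$ and left unproved ``due to technicality''). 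So your proposal is, if anything, more explicit than the paper about where the remaining rigor lies; there is no gap in your plan that the paper closes.
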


\begin{proof}
Let the points $P$, $Q$, $R$, and $S$ with position vectors $\tilde p$, $\tilde q$, $\tilde r$, and $\tilde s$ form an optimal set of four-means for the uniform distribution on $L$. Let the boundaries of their Voronoi regions cut the side $OA$ at two points $D$ and $E$, and cut $AB$, $BO$ at the points $F$, $G$, respectively. Let the position vectors of $D, E, F, G$ be $\tilde d, \tilde e, \tilde f, \tilde g$ with parametric values given by $t=\ga, \gb, \gg, \gd$, respectively.   Then,
\[ \tilde d=(\ga, 0), \, \tilde e=(\gb, 0), \, \tilde f=(\gg , -\sqrt{3} (\gg -1)),  \, \te{ and }  \tilde g=(\gd, \sqrt{3} \gd).\]
Let $\tilde p=E(X : X \in GO\uu OD)$, $\tilde q=E(X : X \in DE)$, $\tilde r=E(X : X\in EA\uu AF)$, and $\tilde s=E(X : X \in FB\uu BG)$. Then, using the definitions of conditional expectations, and after some calculations, we have
\begin{align*}
\tilde p&=\Big(\frac{\frac{\alpha ^2}{2}+\delta ^2}{\alpha +2 \delta },\frac{\sqrt{3} \delta ^2}{\alpha +2 \delta }\Big), \ \tilde q=\Big(\frac{\alpha +\beta }{2},0\Big),\\
\tilde r&=\Big(\frac{-\frac{\beta ^2}{2}-2  (\frac{\gamma ^2}{2}-\frac{1}{2} )+\frac{1}{2}}{-\beta -2 (\gamma -1)+1},-\frac{2  (-\frac{1}{2} \sqrt{3} \gamma ^2+\sqrt{3} \gamma -\frac{\sqrt{3}}{2} )}{-\beta -2 (\gamma -1)+1}\Big), \te{ and }\\
\tilde s&=\Big(\frac{-2  (\frac{1}{8}-\frac{\gamma ^2}{2} )-2  (\frac{\delta ^2}{2}-\frac{1}{8} )}{-2  (\frac{1}{2}-\gamma  )-2  (\delta -\frac{1}{2} )},\frac{-2  (\frac{\sqrt{3} \gamma ^2}{2}-\sqrt{3} \gamma +\frac{3 \sqrt{3}}{8} )-2  (\frac{\sqrt{3} \delta ^2}{2}-\frac{\sqrt{3}}{8} )}{-2  (\frac{1}{2}-\gamma  )-2  (\delta -\frac{1}{2} )}\Big).
\end{align*}
In this case, we obtain the following four canonical equations in $\ga, \gb, \gg$, and $\gd$:
 \[\rho(\tilde d, \tilde p)=\rho(\tilde d, \tilde q), \ \rho(\tilde e, \tilde q)=\rho(\tilde e, \tilde r),\ \rho(\tilde f, \tilde r)=\rho(\tilde f, \tilde s), \te{ and } \rho(\tilde g, \tilde s)=\rho(\tilde g, \tilde p).  \]
 Solving the above four equations, we obtain
 \[ \alpha= 0.28985, \ \beta = 0.71015, \ \gamma = 0.7451, \ \delta = 0.2549 \]
 yielding $ \tilde p= (0.133784, 0.140735), \, \tilde q= (0.5, 0), \, \tilde r =(0.866216, 0.140735), \te{ and } \tilde s=(0.5, 0.653763)$ (see Figure~\ref{Fig1}). Notice that the above four points are symmetric about the median passing through the vertex $B$. Thus, the corresponding quantization error is given by
 \begin{align*}
 V_4&=\frac 2 3 \Big(\int_{\frac 12}^\gb  \rho((t, 0), \tilde q) dt+\int_{\gb}^1  \rho((t, 0), \tilde r) dt-2\int_{1}^\gg  \rho((t, -\sqrt{3} (t-1)), \tilde r) dt\\
 &\qquad \qquad -2\int_{\gg}^{\frac 12}  \rho((t, -\sqrt{3} (t-1)), \tilde s) dt\Big)=0.028269.
 \end{align*}
 Thus, the lemma is yielded.
\end{proof}

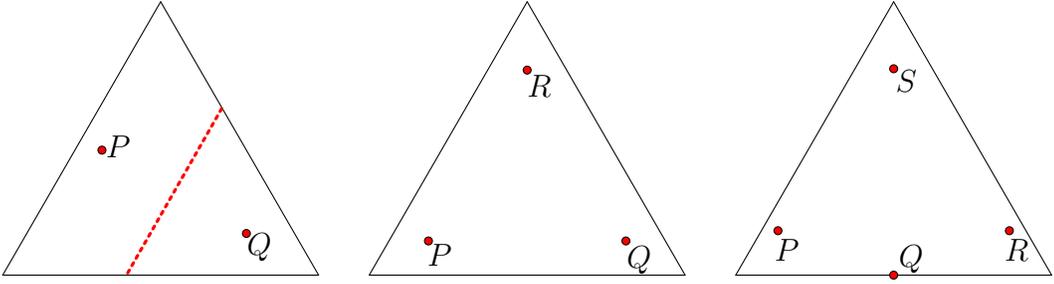
\begin{figure}
\begin{tikzpicture}[line cap=round,line join=round,>=triangle 45,x=0.7cm,y=0.7cm]
\clip(-0.3068784005849317,-0.3327561393976855) rectangle (6.457388811335072,5.386798747680555);
\draw (0.,0.)-- (6.,0.);
\draw (0.,0.)-- (3.,5.196152422706632);
\draw (6.,0.)-- (3.,5.196152422706632);
\draw [line width=1.2pt,dotted,color=ffqqqq] (4.1580549682875265,3.1634556986354005)-- (2.331416490486258,-0.0331616375168149);
\draw (1.759323467230444,2.8590159523351897) node[anchor=north west] {$P$};
\draw (4.431627906976744,0.9985508360561224) node[anchor=north west] {$Q$};
\begin{scriptsize}
\draw [fill=ffqqqq] (4.62837,0.791909) circle (1.5pt);
\draw [fill=ffqqqq] (1.88512,2.37573) circle (1.5pt);
\end{scriptsize}
\end{tikzpicture}
\begin{tikzpicture}[line cap=round,line join=round,>=triangle 45,x=0.7cm,y=0.7cm]
\clip(-0.3068784005849317,-0.3327561393976855) rectangle (6.457388811335072,5.386798747680555);
\draw (0.,0.)-- (6.,0.);
\draw (0.,0.)-- (3.,5.196152422706632);
\draw (6.,0.)-- (3.,5.196152422706632);
\draw (4.685000044513341,0.7641114075321921) node[anchor=north west] {$Q$};
\draw (0.9013843799842402,0.801847564177483) node[anchor=north west] {$P$};
\draw (2.798192212248791,4.00942087902722) node[anchor=north west] {$R$};
\begin{scriptsize}
\draw [fill=ffqqqq] (4.875,0.649519052838329) circle (1.5pt);
\draw [fill=ffqqqq] (3.,3.897114317029974) circle (1.5pt);
\draw [fill=ffqqqq] (1.125,0.649519052838329) circle (1.5pt);
\end{scriptsize}
\end{tikzpicture}
\begin{tikzpicture}[line cap=round,line join=round,>=triangle 45,x=0.7cm,y=0.7cm]
\clip(-0.3068784005849317,-0.3327561393976855) rectangle (6.257388811335072,5.386798747680555);
\draw (0.,0.)-- (6.,0.);
\draw (0.,0.)-- (3.,5.196152422706632);
\draw (6.,0.)-- (3.,5.196152422706632);
\draw (2.890770931130555,0.7777541823257719) node[anchor=north west] {$Q$};
\draw (0.5346525645854395,0.890843222930993) node[anchor=north west] {$P$};
\draw (2.835928368894082,4.097471911199566) node[anchor=north west] {$S$};
\draw (4.899468016557433,0.890843222930993) node[anchor=north west] {$R$};
\begin{scriptsize}
\draw [fill=ffqqqq] (0.8027039999999999,0.84441) circle (1.5pt);
\draw [fill=ffqqqq] (3.,0.) circle (1.5pt);
\draw [fill=ffqqqq] (5.197296,0.84441) circle (1.5pt);
\draw [fill=ffqqqq] (3.,3.9225779999999997) circle (1.5pt);
\end{scriptsize}
\end{tikzpicture}
 \caption{Optimal configuration of $n$ means for $2\leq n\leq 4$.} \label{Fig1}
\end{figure}
\begin{remark}
Due to the symmetry of the equilateral triangle with respect to a rotation of $\frac {2\pi} 3$, by Lemma~\ref{lemma15}, we conclude that there are three optimal sets of four-means (see Figure~\ref{Fig1}).
\end{remark}

Let us now state the following two lemmas which give the optimal sets of five- and six-means. The proofs are similar to the previous lemmas.

\begin{lemma}\label{lemma16}
Let $P$ be the uniform distribution defined on the boundary of the equilateral triangle with vertices $O(0, 0)$, $A(1, 0)$, and $B(\frac 12, \frac {\sqrt 3}{2})$. Then, the set
\begin{align*} \set{ (0.130625, 0.138564), (0.485912, 0), &(0.883966, 0.0669921), (0.742956, 0.445213),  \\
&\qquad \qquad(0.445312, 0.683619)}
\end{align*} forms an optimal set of five-means, and the corresponding quantization error is $V_5=0.020525$ (see Figure~\ref{Fig2}).
\end{lemma}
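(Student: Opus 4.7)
The plan is to adapt the argument of Lemma~\ref{lemma15} to the case $n=5$. The essential new feature is that $5$ is not a multiple of the three-fold rotational symmetry of the equilateral triangle, so the optimal configuration cannot be invariant under rotation by $\tfrac{2\pi}{3}$; consistently, the five quantizers displayed in the statement are not related by any such rotation. By the symmetries of the triangle, whichever combinatorial type of Voronoi partition is optimal produces three congruent optimal configurations.

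First, I would pin down the combinatorial type of the optimal Voronoi partition. Since the five quantizers lie in cyclic order around $L$, their Voronoi regions are five disjoint arcs, determined by five boundary points on $L$. The configuration in the statement corresponds to placing these five boundary points as two on $OA$, two on $AB$, and one on $BO$, so that one Voronoi region contains the vertex $O$ and another contains the vertex $B$ (while no region straddles $A$). Accordingly, introduce parameters $0<\alpha_1<\alpha_2<1$ on $OA$, $\tfrac 12\leq\beta_2<\beta_1\leq 1$ on $AB$, and $0\leq\gamma\leq\tfrac 12$ on $BO$, giving position vectors
\[\tilde d_1=(\alpha_1,0),\q \tilde d_2=(\alpha_2,0),\q \tilde d_3=(\beta_1,-\sqrt 3(\beta_1-1)),\q \tilde d_4=(\beta_2,-\sqrt 3(\beta_2-1)),\q \tilde d_5=(\gamma,\sqrt{3}\,\gamma)\]
for the five Voronoi boundary points, and then compute each quantizer $\tilde p_i$ as the conditional expectation over its arc, using the parametrizations of $L_1,L_2,L_3$ from Subsection~\ref{sec2} and the identity $dP=\tfrac13\,ds$, exactly as in the proof of Lemma~\ref{lemma15}.

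Next, write the five canonical equations $\rho(\tilde d_j,\tilde p_j)=\rho(\tilde d_j,\tilde p_{j+1})$ cyclically in $j$, obtaining a polynomial system of five equations in the five unknowns $\alpha_1,\alpha_2,\beta_1,\beta_2,\gamma$. Because the chosen combinatorial type is not preserved by any nontrivial symmetry of the triangle that the specific configuration does not already break, no algebraic reduction is available; the system is solved numerically (for instance by Newton iteration from a reasonable initial guess), and the unique solution respecting the parameter ordering yields the five centroids in the statement. The quantization error $V_5$ is then obtained by substituting back into the distortion integral over each of the five arcs and summing, producing $V_5=0.020525$.

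The main obstacle is precisely the loss of symmetry: in contrast with $n=2,3,4$, the canonical system cannot be reduced to a one- or two-parameter subproblem, so the quantizers are available only numerically rather than in closed form. A secondary point is to rule out the other combinatorial placements of the five boundary points on $L$ (for instance, three boundary points on a single side, or a configuration in which a Voronoi boundary coincides with a vertex). This is handled by a short case analysis: any Voronoi arc significantly longer than $\tfrac{3}{5}$ already contributes more than $0.020525$ to the distortion, by comparison with Theorem~\ref{theorem0} applied to a straight sub-arc, which eliminates all but a small finite list of competing combinatorial types; each survivor is then dispatched by solving its analogous canonical system and checking that its distortion strictly exceeds $V_5$.
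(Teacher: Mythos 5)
Your strategy---parametrize boundary points, form the cyclic canonical equations, solve, compute distortion---matches the paper's, which for this lemma says only ``The proofs are similar to the previous lemmas.'' But two of your factual claims are wrong and should be fixed.

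You write that no Voronoi region straddles $A$. With two boundary points on $OA$, two on $AB$, and one on $BO$, the arc running from the $OA$-endpoint nearest $A$ around to the $AB$-endpoint nearest $A$ necessarily contains $A$; consistently, the quantizer $(0.883966,0.0669921)$ in the statement lies on no side of the triangle and is the centroid of precisely that arc. Three of the five arcs straddle vertices ($O$, $A$, $B$); two do not.

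This error propagates: your claim that ``no algebraic reduction is available'' for this combinatorial type is also false. The configuration in the statement is invariant under the reflection across the median through $A$ (which exchanges $OA\leftrightarrow AB$, fixes $BO$, and hence preserves the $(2,2,1)$ type): $(0.883966,0.0669921)$ lies on that median, $(0.130625,0.138564)$ and $(0.445312,0.683619)$ are mirror images, as are $(0.485912,0)$ and $(0.742956,0.445213)$. Imposing this symmetry gives $\gamma=\tfrac14$, $\beta_1=\tfrac{1+\alpha_2}{2}$, $\beta_2=\tfrac{1+\alpha_1}{2}$, cutting your five unknowns to two; an analogous mirror symmetry (across the median through $B$) is exactly what the paper exploits to simplify the distortion integral in Lemma~\ref{lemma15}, and here it also makes the canonical system tractable. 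You should spot and use this symmetry rather than assert its absence.
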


\begin{figure}
\begin{tikzpicture}[line cap=round,line join=round,>=triangle 45,x=0.7cm,y=0.7cm]
\clip(-0.1127819138037,-0.3134389727477597) rectangle (6.227037770644073,5.331444573458087);
\draw (0.,0.)-- (6.,0.);
\draw (0.,0.)-- (3.,5.196152422706632);
\draw (6.,0.)-- (3.,5.196152422706632);
\draw (0.5872932176763886,1.0131876237961868) node[anchor=north west] {$P$};
\draw (2.7759483641827734,0.6872672631650832) node[anchor=north west] {$Q$};
\draw (5.110208319103884,0.5774250392732724) node[anchor=north west] {$R$};
\draw (3.798262413769549,2.8463957379960334) node[anchor=north west] {$S$};
\draw (2.6407117000204897,4.288920155727061) node[anchor=north west] {$T$};
\begin{scriptsize}
\draw [fill=ffqqqq] (2.9154720000000003,0.) circle (1.5pt);
\draw [fill=ffqqqq] (5.303796,0.4019526) circle (1.5pt);
\draw [fill=ffqqqq] (4.457736,2.671278) circle (1.5pt);
\draw [fill=ffqqqq] (2.671872,4.101713999999999) circle (1.5pt);
\draw [fill=ffqqqq] (0.78375,0.8313839999999999) circle (1.5pt);
\end{scriptsize}
\end{tikzpicture} \quad
\begin{tikzpicture}[line cap=round,line join=round,>=triangle 45,x=0.7cm,y=0.7cm]
\clip(-0.1127819138037,-0.373438972747758) rectangle (6.227037770644073,5.331444573458088);
\draw (0.,0.)-- (6.,0.);
\draw (0.,0.)-- (3.,5.196152422706632);
\draw (6.,0.)-- (3.,5.196152422706632);
\draw (0.5872932176763886,1.0131876237961868) node[anchor=north west] {$P$};
\draw (2.7759483641827734,0.6872672631650832) node[anchor=north west] {$Q$};
\draw (5.110208319103884,0.5774250392732724) node[anchor=north west] {$R$};
\draw (3.798262413769549,2.8463957379960334) node[anchor=north west] {$S$};
\draw (2.996158732327026,4.589446076087692) node[anchor=north west] {$T$};
\draw (1.4235817225599344,2.7111590738337505) node[anchor=north west] {$U$};
\begin{scriptsize}
\draw [fill=ffqqqq] (0.677124,0.3909378) circle (1.5pt);
\draw [fill=ffqqqq] (4.5,2.598076211353316) circle (1.5pt);
\draw [fill=ffqqqq] (3.,4.4142779999999995) circle (1.5pt);
\draw [fill=ffqqqq] (5.322876,0.3909378) circle (1.5pt);
\draw [fill=ffqqqq] (1.5,2.598078) circle (1.5pt);
\draw [fill=ffqqqq] (3.,0.) circle (1.5pt);
\end{scriptsize}
\end{tikzpicture}
 \caption{Optimal configuration of $n$ means for $n=5, \, 6$.} \label{Fig2}
\end{figure}
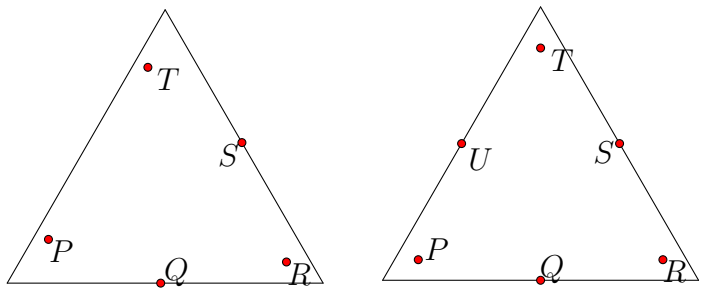

\begin{lemma} \label{lemma17} Let $P$ be the uniform distribution defined on the boundary of the equilateral triangle with vertices $O(0, 0)$, $A(1, 0)$, and $B(\frac 12, \frac {\sqrt 3}{2})$. Then, the set
\begin{align*}
\set{(0.112854, 0.0651563), (0.5, 0), & (0.887146, 0.0651563), (0.75, 0.433013), \\
&(0.5, 0.735713), (0.25, 0.433013)}
\end{align*} forms an optimal set of six-means, and the corresponding quantization error is $V_6=0.0132077$ (see Figure~\ref{Fig2}).
\end{lemma}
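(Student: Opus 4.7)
The plan is to follow the methodology of the preceding lemmas: place six boundary points of Voronoi regions on the closed curve $L$, compute the conditional expectations of the six arcs they determine, impose the canonical equations at each boundary point, and solve the resulting system.

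First, I would exploit the $D_3$ symmetry of the equilateral triangle (rotations by $\frac{2\pi}{3}$ and reflections across the three medians). The candidate set in the lemma is invariant under this group, so a natural ansatz is to seek an optimal configuration in which the six boundary points are parameterized by a single $\alpha \in (0, \frac{1}{2})$: in arc-length measured counter-clockwise from $O$, at $s = \alpha$, $1-\alpha$, $1+\alpha$, $2-\alpha$, $2+\alpha$, $3-\alpha$. This partitions $L$ into three \emph{edge-center} arcs of length $1-2\alpha$ and three \emph{vertex} arcs of length $2\alpha$. By symmetry, the conditional expectation of each edge-center arc is the midpoint of the corresponding side, namely $(\frac{1}{2}, 0)$, $(\frac{3}{4}, \frac{\sqrt{3}}{4})$, and $(\frac{1}{4}, \frac{\sqrt{3}}{4})$. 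For the vertex arc straddling $O$, a direct integration using $ds=|dt|$ on $L_1$ and $ds=2|dt|$ on $L_3$ gives the conditional expectation $\tilde p = (\frac{3\alpha}{8}, \frac{\sqrt{3}\alpha}{8})$; the other two vertex conditional expectations follow by rotating $\tilde p$ about the centroid $(\frac{1}{2}, \frac{\sqrt{3}}{6})$ by $\pm\frac{2\pi}{3}$.

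Next, I would impose the canonical equation at the boundary point $\tilde d = (\alpha, 0)$, namely $\rho(\tilde d, \tilde p) = \rho(\tilde d, \tilde q)$ with $\tilde q = (\frac{1}{2}, 0)$. Expanding $(\alpha - \frac{3\alpha}{8})^2 + (\frac{\sqrt{3}\alpha}{8})^2 = (\alpha - \frac{1}{2})^2$ and simplifying reduces this to the quadratic $9\alpha^2 - 16\alpha + 4 = 0$, whose unique root in $(0, \frac{1}{2})$ is $\alpha = \frac{8 - 2\sqrt{7}}{9} \approx 0.300944$. Back-substituting gives $\tilde p = \bigl(\frac{4 - \sqrt{7}}{12}, \frac{\sqrt{3}(4 - \sqrt{7})}{36}\bigr) \approx (0.112854, 0.0651563)$, which agrees with the statement of the lemma. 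The quantization error then decomposes, by symmetry, as $V_6 = 3 V_{\text{vertex}} + 3 V_{\text{midpoint}}$, where $V_{\text{vertex}}$ and $V_{\text{midpoint}}$ are the integrals of $\rho((x_1, x_2), \cdot)$ against $dP$ over a single vertex arc and a single edge-center arc respectively; a straightforward evaluation of these integrals yields $V_6 \approx 0.0132077$.

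The main obstacle, as in Lemmas~\ref{lemma2} and~\ref{lemma14}, is ruling out spurious critical points. In principle, setting up the full system of six canonical equations without imposing symmetry produces additional candidate solutions, such as degenerate ones where a boundary point coincides with a vertex, or configurations stabilized only by a proper subgroup of $D_3$. I would mimic the enumeration used in Lemma~\ref{lemma2}: list all real solutions of the full system in the geometrically meaningful range, compute the distortion error of each, and verify that the $D_3$-symmetric configuration is the unique one (up to the group action) attaining the minimum. Once this comparison is complete, the lemma follows.
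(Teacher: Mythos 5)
Your proposal is correct and, in substance, is the paper's Theorem~\ref{Th41} specialized to $k=1$ (the paper only says Lemma~\ref{lemma17} is ``similar to the previous lemmas'' and later recovers it as the $k=1$ case of that theorem; note the paper's remark misprints ``$k=3$'' where it means $k=1$). Your $D_3$-symmetric ansatz, the formula $\tilde p = (\tfrac{3\alpha}{8},\tfrac{\sqrt3\alpha}{8})$, and the value $\alpha = \tfrac{8-2\sqrt7}{9}$ agree exactly with the paper's $a=(\tfrac{3r}{8},\tfrac{\sqrt3 r}{8})$ and $r=\tfrac{8-2\sqrt7 k}{16-7k^2}$ at $k=1$, and Theorem~\ref{Th41}'s error formula gives $V_6=\tfrac{7(23-8\sqrt7)}{972}\approx 0.0132077$. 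The one genuine (small) difference is how the scalar $\alpha=r$ is pinned down: you impose the canonical equation $\rho(\tilde d,\tilde p)=\rho(\tilde d,\tilde q)$ at the cell boundary $\tilde d=(\alpha,0)$, yielding $9\alpha^2-16\alpha+4=0$, whereas the paper writes $V_n$ as an explicit cubic in $r$ and sets $\partial V_n/\partial r =0$; these are equivalent because, once the centroid conditions are already built in, the derivative of the distortion with respect to a cell-boundary location is exactly the boundary term that vanishes precisely when the perpendicular-bisector (canonical) condition holds. Your approach is arguably slightly cleaner for $k=1$, while the paper's parameterization scales to general $k$. Finally, your ``promissory note'' about ruling out asymmetric or degenerate critical points is at the same level of rigor as the paper itself, which defers the corresponding structural claim (one point per interior angle and equidistributed points on the sides) to Remark~\ref{remark101} and does not prove it.
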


\begin{remark} \label{remark101}
By Lemma~\ref{lemma14}, Lemma~\ref{lemma15}, Lemma~\ref{lemma16}, and Lemma~\ref{lemma17}, we see that if $\ga_n$ is an optimal set of $n$-means for $3\leq n\leq 6$, then $\ga_n$ contains three elements in the interior of the triangle each close to one of
the vertices, and the remaining elements are on the sides of the triangle.
In fact, for $n\geq 6$, if $\ga_n$ is an optimal set of $n$-means, then it can be shown that $\ga_n$ contains three elements in the interior of the triangle each close to one of
the vertices, and the remaining elements are on the sides of the triangle. If $\ga_n$ contains $n_1$, $n_2$, and $n_3$ elements from the three sides of the triangle, where $n=n_1+n_2+n_3+3$, it can be shown that  $|n_i-n_j|\leq 1$ for $1\leq i\neq j\leq 3$. If it is not true, then the quantization error can be reduced by redistributing the points until $n_1$, $n_2$, and $n_3$ satisfy $|n_i-n_j|\leq 1$ for $1\leq i\neq j\leq 3$, and this contradicts the optimality of the set $\ga_n$. Due to technicality, we are not showing the details of the proof in the paper.
\end{remark}

The following theorem determines the optimal sets of $n$-means and the $n$th quantization errors when $n=3k+3$ for some positive integer $k$. It also helps us to determine the quantization dimension and the quantization coefficient for the uniform distribution defined on the boundary of the equilateral triangle.

\begin{theorem} \label{Th41} Let $n\in \D N$ be such that $n\geq 6$, and $n=3k+3$ for some positive integer $k$. Then, the optimal set of $n$-means for $P$ is given by
\begin{align*} \ga_n=\Big\{(\frac{3 r }{8},\frac{\sqrt{3} r }{8}), \, (1-\frac{3 r }{8},\frac{\sqrt{3} r }{8}), \,  (\frac{1}{2},-\frac{1}{4} \sqrt{3} (r -2)) \Big\} \uu \gg \uu T_1(\gg)\uu T_2(\gg),
\end{align*}
where $\gg:=\set{r+\frac{2j-1}{2k}(1-2r) : j=1, 2, \cdots, k}$, and $r=\frac{8-2 \sqrt{7} k}{16-7 k^2}$, and  $T_1$, $T_2$ are two affine transformations on $\D R^2$, such that $T_1(0,0)=(\frac 12, \frac {\sqrt{3}} 2),\, T_1(1, 0)=(1, 0), \, T_2(0, 0)=(0, 0), \te{ and } T_2(1, 0)=(\frac 12, \frac {\sqrt{3}} 2)$.
The quantization error for $n$-means is given by
\[V_n=\frac{7 \left(7 k^2-8 \sqrt{7} k+16\right)}{12 \left(16-7 k^2\right)^2}.\]

\end{theorem}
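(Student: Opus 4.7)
The plan is to exploit the three-fold rotational symmetry of the equilateral triangle together with Remark~\ref{remark101} to reduce the problem to a single parameter $r$, and then to apply a canonical equation to determine that parameter. By Remark~\ref{remark101}, an optimal $\ga_n$ for $n=3k+3$ contains exactly one quantizer in each of the three interior angles, together with $n_1,n_2,n_3$ quantizers along the three sides with $|n_i-n_j|\le 1$ and $n_1+n_2+n_3=n-3=3k$; hence $n_1=n_2=n_3=k$. Combined with the $2\pi/3$ rotational invariance of $P$ about the centroid of the triangle, this allows me to restrict attention to $2\pi/3$-invariant configurations, described by a single parameter $r\in(0,\tfrac 12)$: the common length of the portion of each side lying in the Voronoi region of the adjacent corner quantizer.

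Given $r$, the centroid of the $O$-corner Voronoi region is computed as the conditional expectation of $X$ over the two arcs $\set{(t,0):0\le t\le r}$ on $OA$ and $\set{(u/2,u\sqrt 3/2):0\le u\le r}$ on $OB$, of total length $2r$; direct integration yields $(3r/8,\sqrt 3\, r/8)$, and the other two corner centroids $(1-3r/8,\sqrt 3\, r/8)$ and $(1/2,-\sqrt 3(r-2)/4)$ follow by rotation. The $k$ remaining quantizers on side $OA$ have Voronoi regions covering $\set{(t,0): r\le t\le 1-r}$, on which $P$ restricts to $\tfrac 13$ times one-dimensional Lebesgue measure; Theorem~\ref{theorem0} applied to the uniform distribution on $[r,1-r]$ then forces them to be $\ga=\set{(r+\tfrac{2j-1}{2k}(1-2r),0): 1\le j\le k}$, and the side quantizers on $AB$ and $BO$ are $T_1(\ga)$ and $T_2(\ga)$. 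To pin down $r$, I impose the canonical equation at the Voronoi boundary point $D=(r,0)$ between the $O$-corner region and the first side quantizer $(r+\tfrac{1-2r}{2k},0)$:
\[ \rho\big((r,0),(3r/8,\sqrt 3\, r/8)\big)=\rho\big((r,0),(r+\tfrac{1-2r}{2k},0)\big), \]
which reduces to $(5r/8)^2+(\sqrt 3\, r/8)^2=((1-2r)/(2k))^2$, i.e.\ $7k^2 r^2=4(1-2r)^2$, with admissible root $r=2/(\sqrt 7\, k+4)=(8-2\sqrt 7\, k)/(16-7k^2)$.

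The total quantization error decomposes into three equal corner contributions plus three equal side contributions. A routine integration of $\|x-(3r/8,\sqrt 3\, r/8)\|^2$ against $\tfrac 13 ds$ over the $O$-corner arc yields $7r^3/72$ per corner, while the error per side equals $(1-2r)^3/(36k^2)$ (from Theorem~\ref{theorem0} on $[r,1-r]$, rescaled by the density $\tfrac 13$); hence
\[ V_n=\frac{7r^3}{24}+\frac{(1-2r)^3}{12k^2}. \]
Substituting $r=2/(\sqrt 7\, k+4)$ and using the identities $(16-7k^2)^2=(\sqrt 7\, k-4)^2(\sqrt 7\, k+4)^2$ and $7k^2-8\sqrt 7\, k+16=(\sqrt 7\, k-4)^2$ collapses $V_n$ to $7/(12(\sqrt 7\, k+4)^2)$, which is the closed form in the statement. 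The main technical obstacle is the symmetry reduction in the first step: Remark~\ref{remark101} guarantees equal side-counts but not equal corner lengths $r_1,r_2,r_3$, so one must write the full system of canonical equations in $r_1,r_2,r_3$ and invoke an exchange or convexity argument to force $r_1=r_2=r_3$, after which the symmetric ansatz and the computations above apply.
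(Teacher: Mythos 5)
Your proposal is correct and follows essentially the same symmetry-reduction strategy as the paper: reduce to the one-parameter family indexed by the corner side-length $r$, compute the corner centroid $(3r/8,\sqrt3 r/8)$, place the $k$ side quantizers uniformly on $[r,1-r]$ via Theorem~\ref{theorem0}, and then pin down $r$. The one cosmetic difference is how $r$ is determined: you impose the canonical (perpendicular-bisector) equation at $D=(r,0)$, while the paper writes $V_n$ explicitly as $\tfrac{1}{24}\bigl(7r^3-\tfrac{2(2r-1)^3}{k^2}\bigr)$ and solves $\partial V_n/\partial r=0$; these two conditions are equivalent (both reduce to $7k^2r^2=4(1-2r)^2$) and yield the same $r=\tfrac{2}{\sqrt7\,k+4}$ and the same $V_n$. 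You are also candid, and correct, that the rigorous justification of the fully symmetric ansatz (equal corner lengths $r_1=r_2=r_3$) is not actually supplied by Remark~\ref{remark101}; the paper glosses over this in the same way.
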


\begin{proof}
Let $a, b, c$ be the three points that $\ga_n$ contains from the interior of the angles $\angle O$, $\angle A$, and $\angle B$, respectively. Again, recall that $P$ is uniform over the boundary of the triangle, and as mentioned in Remark~\ref{remark101}, $\ga_n$ contains $k$ elements from each side of the triangle yielding the fact that the Voronoi regions of $a, b, c$ will form equilateral triangles $P$ almost surely. Let the lengths of the sides of the equilateral triangles formed by the Voronoi regions of $a, b, c$ equal $r$. Then,
\begin{align*} a&=\frac{\int_0^{r }(t, 0) \, dt-2 \int_{\frac{r }{2}}^0 (t,\sqrt{3} t) \, dt}{\int_0^{r } 1 \, dt-2 \int_{\frac{r }{2}}^0 1 \, dt}=(\frac{3 r }{8},\frac{\sqrt{3} r }{8}),\\
b&=\frac{\int_{1-r}^1 (t,0) \, dt-2 \int_1^{1-\frac{r}{2}} (t,-\sqrt{3} (t-1))\, dt}{\int_{1-r}^1 1 \, dt-2 \int_1^{1-\frac{r}{2}} 1 \, dt}=(1-\frac{3 r}{8},\frac{\sqrt{3} r}{8}), \te{ and }\\
c&=\frac{-2 \int_{\frac 12(1+r)}^{\frac{1}{2}} (t,-\sqrt{3} (t-1)) \, dt-2 \int_{\frac{1}{2}}^{\frac 12(1-r)} (t,\sqrt{3} t) \, dt}{-2 \int_{\frac 12(1+r)}^{\frac{1}{2}} 1 \, dt-2 \int_{\frac{1}{2}}^{\frac 12(1-r)} 1\, dt}=(\frac{1}{2},-\frac{1}{4} \sqrt{3} (r-2)).
\end{align*}
Let $\gg$ be the set of all the $k$ points that $\ga_n$ contains from the side $OA$. Then, the Voronoi regions of the points in $\gg$ covers the closed interval $[r, 1-r]$ yielding
\[\gg=\Big\{\Big(r+\frac{2j-1}{2k}(1-2r), 0\Big) : j=1, 2, \cdots, k\Big\}.\]
Since $T_1$ and $T_2$ are affine transformations with $T_1(OA)=AB$, and $T_2(OA)=OB$, we have the points that $\ga_n$ contains from $AB$ and $OB$ are, respectively, $T_1(\gg)$ and $T_2(\gg)$ yielding
\begin{align*} \ga_n=\Big\{(\frac{3 r }{8},\frac{\sqrt{3} r }{8}), \, (1-\frac{3 r }{8},\frac{\sqrt{3} r }{8}), \,  (\frac{1}{2},-\frac{1}{4} \sqrt{3} (r -2)) \Big\} \uu \gg \uu T_1(\gg)\uu T_2(\gg).
\end{align*}
Using the symmetry, the quantization error for $n$-means is obtained as
\begin{align*}
V_n&=3(\te{quantization error due to the points  $a$ and the $k$ points in $\gg$})\\
&=3\Big(-\frac 2 3\int_{\frac {r}{2}}^0\rho((t, \sqrt{3}t), a) dt+\frac{1}{3} \int_0^{r } \rho((t, 0), a) dt\\
&\qquad \qquad +\frac{1}{3} k \Big(\int_{r }^{r +\frac{1-2 r }{k}} \rho\Big((t, 0), (r+\frac{1}{2k}(1-2r), 0)\Big) dt\Big)\\
&=\frac{1}{24} \Big(7 r ^3-\frac{2 (2 r -1)^3}{k^2}\Big).
\end{align*}
Notice that for a given $k$, the quantization error $V_n$ is a function of $r$. Solving $\frac{\pa V_n}{\pa r}=0$, we have $r=\frac{8-2 \sqrt{7} k}{16-7 k^2}$. Putting $r=\frac{8-2 \sqrt{7} k}{16-7 k^2}$, we have
\[V_n=\frac{7 \left(7 k^2-8 \sqrt{7} k+16\right)}{12 \left(16-7 k^2\right)^2}.\]
Thus, the proof the theorem is complete.
\end{proof}

\begin{remark}  When $n=6$, i.e., when $k=1$, then Theorem~\ref{Th41} reduces to Lemma~\ref{lemma17}. Using a similar technique as in the proof of Theorem~\ref{Th41}, we can also determine the optimal sets of $n$-means and the $n$th quantization errors when $n=3k+3+1$, or $n=3k+3+2$ for some positive integer $k$.
\end{remark}
\begin{prop}
Quantization dimension $D(P)$ of the uniform distribution $P$ defined on the boundary of the equilateral triangle equals the dimension of the boundary of the triangle. Moreover, the quantization coefficient exists as a finite positive number which equals $\frac 34$.
\end{prop}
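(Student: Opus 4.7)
The plan is to read off the asymptotic behaviour of $V_n$ directly from the explicit formula in Theorem~\ref{Th41} along the subsequence $n = 3k+3$, and then to extend to every $n$ by a monotonicity argument.

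First, substituting $n = 3(k+1)$ into the closed form $V_n = \frac{7(7k^2 - 8\sqrt{7}k + 16)}{12(16-7k^2)^2}$, the numerator behaves like $49 k^2$ and the denominator like $12\cdot 49\, k^4$ as $k \to \infty$, so $V_n \sim \frac{1}{12 k^2}$. Since $n^2 = 9(k+1)^2 \sim 9 k^2$, this gives $n^2 V_n \to \frac{9k^2}{12 k^2} = \frac{3}{4}$ along this subsequence. Taking logarithms, $-\log V_n \sim 2\log k + O(1) \sim 2\log n + O(1)$, and hence $\frac{2\log n}{-\log V_n} \to 1$.

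Second, to pass to arbitrary $n \in \D N$, use the fact that $V_n$ is nonincreasing in $n$ (any $n$-point candidate set can be padded to size $n+1$ by repetition, so $V_{n+1} \leq V_n$). For $3k+3 \leq n \leq 3(k+1)+3$ the sandwich $V_{3(k+1)+3} \leq V_n \leq V_{3k+3}$, together with $n \sim 3k$ on this range, forces both $n^2 V_n$ and $\frac{2\log n}{-\log V_n}$ to inherit the subsequence limits $\frac{3}{4}$ and $1$, respectively. Consequently $D(P) = 1$, matching the one-dimensional nature of the boundary of the triangle, and the quantization coefficient equals $\frac{3}{4}$.

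The main obstacle is the asymptotic book-keeping: one must verify that the subdominant terms $-8\sqrt{7}k + 16$ in the numerator and the cross terms in $(16 - 7k^2)^2 = 49 k^4 - 224 k^2 + 256$ in the denominator of $V_n$ really do vanish after multiplying by $n^2 = 9(k+1)^2$, rather than contributing a nonzero residual. Once this is confirmed, the monotonicity sandwich removes the need to analyse the cases $n = 3k+4$ and $n = 3k+5$ (deferred in the preceding remark) individually.
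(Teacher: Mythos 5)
Your argument is correct and is essentially the same as the paper's: in both cases one reads off $V_{3k+3}$ from Theorem~\ref{Th41}, notes the monotone decrease of $n\mapsto V_n$, and squeezes a general $n$ between the two neighbouring subsequence values $3\ell+3$ and $3(\ell+1)+3$ to get $n^2V_n\to\frac34$ and $\frac{2\log n}{-\log V_n}\to 1$. The only cosmetic difference is that you spell out the monotonicity justification and do the leading-order asymptotics informally, whereas the paper introduces the index $\ell(n)$ explicitly and writes the sandwich limits in full; the substance is identical.
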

\begin{proof} For $n\in \D N$, $n\geq 6$, let $\ell(n)$ be the unique positive real number such that $3\ell(n)+3\leq n<3(\ell(n)+1)+3$. Then,
$V_{3(\ell(n)+1)+3}<V_n\leq V_{3\ell(n)+3}$ implying
\begin{align} \label{eq45}
\frac {2 \log (3\ell(n)+3)}{-\log V_{3(\ell(n)+1)+3}}<\frac {2\log n}{-\log V_n} <\frac{2 \log (3(\ell(n)+1)+3)}{-\log V_{3\ell(n)+3}}.
\end{align}
Notice that
\[\lim_{n\to \infty} \frac {2 \log (3\ell(n)+3)}{-\log V_{3(\ell(n)+1)+3}}=\underset{\ell(n)\to \infty }{\text{lim}}\frac{2 \log (3 \ell(n)+3)}{-\log \left(\frac{7 \left(7 (\ell(n)+1)^2-8 \sqrt{7} (\ell(n)+1)+16\right)}{12 \left(16-7 (\ell(n)+1)^2\right)^2}\right)}=1,\]
and \[\lim_{n\to \infty} \frac{2 \log (3(\ell(n)+1)+3)}{-\log V_{3\ell(n)+3}}=\underset{\ell(n)\to \infty }{\text{lim}}\frac{2 \log (3 (\ell(n)+1)+3)}{-\log \left(\frac{7 \left(7 \ell(n)^2-8 \sqrt{7} \ell(n)+16\right)}{12 \left(16-7 \ell(n)^2\right)^2}\right)}=1\]
and hence, by \eqref{eq45}, $\mathop{\lim}\limits_{n\to \infty} \frac {2\log n}{-\log V_n}=1$ which is the dimension of the underlying space. Again,
\begin{equation} \label{eq46} (3\ell(n)+3)^2V_{3(\ell(n)+1)+3}<n^2 V_n<(3(\ell(n)+1)+3)^2V_{3\ell(n)+3}.
\end{equation}
We have
\[\lim_{n \to \infty} (3\ell(n)+3)^2V_{3(\ell(n)+1)+3}=\underset{\ell(n)\to \infty }{\text{lim}}(3\ell(n)+3)^2 \frac{7 \left(7 (\ell(n)+1)^2-8 \sqrt{7} (\ell(n)+1)+16\right)}{12 \left(16-7 (\ell(n)+1)^2\right)^2}=\frac 34,\]
and
\[\lim_{n \to \infty} (3(\ell(n)+1)+3)^2V_{3\ell(n)+3}=\underset{\ell(n)\to \infty }{\text{lim}}(3(\ell(n)+1)+3)^2 \frac{7 \left(7 \ell(n)^2-8 \sqrt{7} \ell(n)+16\right)}{12 \left(16-7 \ell(n)^2\right)^2}=\frac 34,\]
and hence, by \eqref{eq46}, we have
$\mathop{\lim}\limits_{n\to\infty} n^2 V_n=\frac 34$, i.e., the quantization coefficient exists as a finite positive number which equals $\frac 34$.
Thus, the proof of the proposition is complete.
\end{proof}

\begin{remark}
By Remark~\ref{remark101}, we see that an optimal set $\ga_n$ of $n$-means for $n\geq 3$ always contains
three elements in the interior of the triangle each close to one of
the vertices, and the remaining elements are on the sides of the triangle. It can be shown that as $n$ approaches to infinity each of the three elements which are in the interior of the triangle approaches to their closest vertices.
\end{remark}

Let us now end the paper with the following remark.

\begin{rem} \label{rem1}  We know that an affine transformation $T$ on $\D R^2$ is given by \[T\left[\begin{array}{cc}
x_1\\
x_2
\end{array}
\right]=A\left[\begin{array}{cc}
x_1\\
x_2
\end{array}
\right]+\left[\begin{array}{cc}
e\\
f
\end{array}
\right], \te{ where } A:=\left[\begin{array}{cc}
a & b\\
c  & d
\end{array}
\right]  \te{ is an invertible matrix, and } e, f \in \D R.\]
Once the optimal sets of $n$-means are known for a line segment, circle, or an equilateral triangle, by giving an affine transformation, one can easily obtain them for any line segment, circle, or an equilateral triangle. For example, by Theorem~\ref{theorem0}, we see that the optimal set of $n$-means with respect to the uniform distribution on the line segment $[0, 1]$ is given by $ \ga_n:=\set{\frac {2i-1}{2n} : 1\leq i\leq n}$, and the corresponding quantization error is
$V_n:=V_n(P)=\frac{1}{12 n^2}.$  The optimal set of $n$-means for the uniform distribution on the line segment joining $(0, 0)$ and $(1, \sqrt 3)$ is given by $ \ga_n:=\set{(\frac{2 i-1}{2 n}, \frac{\sqrt{3} (2 i-1)}{2 n}) : 1\leq i\leq n}$, and the corresponding quantization error is
$V_n:=V_n(P)=\frac{1}{3 n^2}$.
\end{rem}

\end{document}